\pgfplotsset{compat=1.16}
\theoremstyle{plain}
\newtheorem{theorem}{Theorem}[section]
\newtheorem{lemma}[theorem]{Lemma}
\newtheorem{corollary}[theorem]{Corollary}
\newtheorem{proposition}[theorem]{Proposition}
\theoremstyle{remark}
\newtheorem{remark}[theorem]{Remark}
\newtheorem{definition}[theorem]{Definition}
\newtheorem{example}{Example}
\pgfplotsset{compat=1.16} 
\newcommand{\pr}{\normalfont\textrm{Pr}}
\newcommand{\diff}{\, \mathrm{d}}
\newcommand{\topic}[1]{#1}
\title{Exchangeable FGM copulas}
\author{Christopher Blier-Wong\thanks{Corresponding author: Christopher Blier-Wong, christopher.blier-wong.1@ulaval.ca. Address: 2425, rue de l'Agriculture, office 00154A. Québec (Québec) G1V 0A6 Canada}}
\author{Hélène Cossette}
\author{Etienne Marceau}
\affil{École d'actuariat, Université Laval, Québec, Canada}
\date{January 15, 2022} 
\begin{document}

\maketitle

\begin{abstract}
Copulas are a powerful tool to model dependence between the components of a random vector. One well-known class of copulas when working in two dimensions is the Farlie-Gumbel-Morgenstern (FGM) copula since their simple analytic shape enables closed-form solutions to many problems in applied probability. However, the classical definition of high-dimensional FGM copula does not enable a straightforward understanding of the effect of the copula parameters on the dependence, nor a geometric understanding of their admissible range. We circumvent this issue by studying the FGM copula from a probabilistic approach based on multivariate Bernoulli distributions. This paper studies high-dimensional exchangeable FGM copulas, a subclass of FGM copulas. We show that dependence parameters of exchangeable FGM can be expressed as convex hulls of a finite number of extreme points and establish partial orders for different exchangeable FGM copulas (including maximal and minimal dependence). We also leverage the probabilistic interpretation to develop efficient sampling and estimating procedures and provide a simulation study. Throughout, we discover geometric interpretations of the copula parameters that assist one in decoding the dependence of high-dimensional exchangeable FGM copulas. 
\end{abstract}

\textbf{Keywords:} Copulas, stochastic representation, extreme points, exchangeable distributions

\section{Introduction}

\topic{Copulas are a powerful tool to model dependence between the components of a random vector.} A well-known family of copulas is the Farlie-Gumbel-Morgenstern (FGM) copulas, first studied by \cite{eyraud1936principes}, \cite{morgenstern1956einfache}, \cite{farlie1960performance}, and \cite{gumbel1960bivariate}. 

\topic{FGM copulas are attractive since their simple shape enables exact calculus.} Being quadratic in each marginal, FGM copulas allow one to develop closed-form expressions for many quantities of interest. The properties of FGM copulas, from a mathematical perspective, are well established, see, for instance, \cite{cambanis1977some}, \cite{johnson1975generalized}, \cite{mai2014financial}, \cite[chapter 5]{kotz2001correlation}, \cite[Section 44.10]{kotz2004continuous}, \cite{durante2015principles} or \cite{nelsen2007introduction}. Given a set of dependence parameters, FGM copulas have been applied in many applications, including in, for instance, finance (\cite{mai2014financial}), actuarial science (\cite{barges2011moments}), bioinformatics (\cite{kim2008copula}) and hydrology \cite{genest2007everything}. However, little is known about the interpretation of the FGM copula parameters for higher dimensions, in particular, how the copula parameters affect the dependence structure and how one may compare dependence constructions in terms of dependence orders (see Section 3.9 in \cite{muller2002comparison} for details on the latter topic).

\topic{In \cite{blier-wong2022stochasticb}, the authors establish a one-to-one correspondence between FGM copulas and symmetric multivariate Bernoulli distributions.} One advantage of this representation is that one may construct subfamilies of FGM copulas by selecting subfamilies of multivariate symmetric Bernoulli distributions. Then, the subfamily of FGM copulas will share the dependence properties of the symmetric Bernoulli distributions, thus simplifying the parameter space and related operations like sampling and estimation. Another advantage of the stochastic representation of FGM copulas is that multivariate Bernoulli distributions are simpler to understand. While FGM copulas only induce weak dependence, they are the simplest of the Bernstein copulas. A $d$-dimensional Bernstein copula, introduced by \cite{sancetta2004bernstein}, is dense on the hypercube $[0, 1]^d$, but many dependence parameters are required to specify the copula. Indeed, due to their flexibility, Bernstein copulas are sometimes used as alternatives to the empirical copula \cite{segers2017empirical}. Understanding the stochastic nature of FGM copulas is essential preliminary work before investigating the properties, geometries and stochastic nature of Bernstein copulas.



\topic{The present paper introduces exchangeable FGM (eFGM) copulas}. The eFGM copulas are subfamilies of FGM copulas that we construct with exchangeable symmetric multivariate Bernoulli random vectors. It follows that exchangeable FGM copulas are the simplest of the exchangeable Bernstein copulas and that understanding the geometry and properties of eFGM copulas lays the groundwork to understand the properties of exchangeable Bernstein copulas. In turn, one will be in a better position to understand the general class of Bernstein copulas. The exchangeability assumption is reasonable and useful in some contexts, consider, for instance, the study of litter-mates in laboratory experiments (\cite{kuk2004litterbased}), finance (\cite{perreault2019detection}), reliability theory (\cite{navarro2005note}), actuarial science (\cite{kolev2008random}) or credit default risk (\cite{fontana2021model}). Exchangeability also plays an important role in Bayesian statistics (\cite{schervish2012theory}). 

\topic{Another advantage of studying the class of eFGM copulas is that the FGM copula corresponding to the lower bound under the supermodular order is a special case of the class of eFGM copulas}. We study this lower bound in detail in Section \ref{sec:dependence-ordering}. We also introduce subfamilies of eFGM copulas that display a specific shape of dependence structure, and one may compare copulas under the supermodular order within the subfamilies. The ability to order random vectors with respect to the supermodular order is important for practical applications. For instance, in applied probability, finance, and actuarial science, one may be interested in the aggregate distribution of a random vector. If one may order two copulas under the supermodular order, then one may order the two aggregate distributions under the stop-loss order, which implies inequalities with respect to certain useful risk measures. See, for instance, Section 8.3 of \cite{muller2002comparison} or Section 6.3 of \cite{denuit2006actuarial} for details on the supermodular order, the stop-loss order and aggregate distributions. 

\topic{The remainder of this paper is organized as follows.} In Section \ref{sec:definition}, we introduce the subclass of eFGM copulas. Section \ref{sec:construction} presents construction methods for symmetric exchangeable Bernoulli random variables, and their relationship to eFGM copulas. In Section \ref{ss:rays}, we show that the parameters of all eFGM copulas can be expressed as a convex hull of eFGM copula dependence parameters. We also provide a method for analytically obtaining the set of extreme points corresponding to the copula parameters. It follows that one can view the subfamily of eFGM copulas as a finite mixture model. By studying the extreme points of the convex hull of the dependence parameters of eFGM copulas, we gain a geometric understanding of the class of eFGM copulas. We deal with dependence ordering in Section \ref{sec:dependence-ordering}, providing methods to compare eFGM copulas under the supermodular order. In Section \ref{sec:sample-estimation}, we discuss sampling and estimation for high dimensional eFGM copulas. In particular, we leverage the stochastic representation of eFGM copulas to propose an efficient stochastic sampling method, and we leverage the finite mixture representation to propose an estimation algorithm. Certain proofs are deferred to the Apprendix.

\section{Definition}\label{sec:definition}

In this section, we introduce the subfamily of copulas studied in the paper. Copulas are multivariate cumulative distribution functions (cdf) of rvs with uniform marginals. 
\begin{definition}\label{def:copula}
	A ($d$-variate) copula is a function $C: [0, 1]^d \to [0, 1]$ satisfying
	\begin{enumerate}
		\item $C(u_1, \dots, u_d) = 0$ if any $u_j = 0$, $j \in \{1, \dots, d\}$.
		\item $C(u_1, \dots, u_d) = u_j$ if $u_k = 1$  for all $k \in \{1, \dots, d\}$ and $k \neq j$.
		\item $C$ is $d$-increasing on $[0, 1]^d$, that is,
		$$\sum_{i_1 = 1}^2\dots \sum_{i_d = 1}^2 (-1)^{i_1 + \dots + i_d}C(u_{1i_1}, \dots, u_{di_d}) \geq 0$$
		for all $0 \leq u_{j1} \leq u_{j2} \leq 1$ and $j \in \{1, \dots, d\}.$
	\end{enumerate}
\end{definition}

An important family of copulas is the FGM copulas, first studied by \cite{eyraud1936principes}, \cite{farlie1960performance}, \cite{gumbel1960bivariate} and \cite{morgenstern1956einfache}. One may refer to \cite{cambanis1977some}, \cite{johnson1975generalized}, Chapter 5 of \cite{kotz2001correlation}, Section 44.10 of \cite{kotz2004continuous} or \cite{genest2007everything} for properties of this family of copulas. An FGM copula is defined as
\begin{equation} \label{fgmco1205}
	C\left( u_1, \dots, u_d\right) =\left(\prod_{j=1}^d u_j\right) \left( 1+\sum_{k=2}^{d}\sum_{1\leq j_{1}<\dots <j_{k}\leq d}\theta_{j_{1}\dots j_{k}}\overline{u}_{j_{1}}\overline{u}_{j_{2}}\dots \overline{u}_{j_{k}}\right)
	\quad (u_1, \dots, u_d)\in [0,1]^{d},
\end{equation}
where $\overline{u}_{j}=1-{u}_{j}$, $j \in \{1,\dots,d\}$. The constraints on the parameters of FGM copulas, derived in \cite{cambanis1977some}, are as follows:
\begin{equation}\label{eq:constraints-general}
	\left\{(\theta_{12}, \dots, \theta_{1\dots d}) \in \mathbb{R}^{2^d - d - 1} : 1+\sum_{k=2}^{d}\sum_{1\leq j_{1}<\cdots <j_{k}\leq d}\theta_{j_{1}\ldots j_{k}}\varepsilon _{j_{1}}\varepsilon _{j_{2}}\ldots
	\varepsilon_{j_{k}}\geq 0\right\},  
\end{equation}
for $\{\varepsilon _{j_{1}},\varepsilon _{j_{2}},\ldots, \varepsilon_{j_{k}}\} \in \{-1,1\}^d$. We call the $\binom{d}{k}$ parameters $\theta_{j_{1}\dots j_{k}}$, for $1 \leq j_1 < \dots < j_k \leq d$ the $k$-dependence parameters, $k \in \{2, \dots, d\}$. A $d$-variate FGM copula has $2^d - d - 1$ parameters, the large number of parameters becomes impractical for large dimensional applications of FGM copulas. The current paper studies the subfamily of exchangeable FGM copulas that have the shape
\begin{equation}\label{eq:eFGM-copula}
	C_d(u_1, \dots, u_d) = \left(\prod_{j = 1}^d u_j\right)\left(1 + \sum_{k = 2}^d \sum_{1\leq j_1< \dots < j_k\leq d} \theta_k \overline{u}_{j_1} \dots \overline{u}_{j_k}\right), \quad (u_1, \dots, u_d) \in [0, 1]^d.
\end{equation}
For $k \in \{2, \dots, d\}$, this class of FGM copulas sets each of the $\binom{d}{k}$ parameters $\theta_{j_{1}\dots j_{k}} = \theta_k$ for all $1 \leq j_1 < \dots < j_k \leq d$, that is, every $k$-dependence parameters are equal. By symmetry of the bivariate FGM copula, it is obvious that with $d = 2$, each admissible parameter $\theta_2 \in [-1, 1]$ corresponds to an exchangeable bivariate FGM copula parameter, that is, the entire class of bivariate FGM copulas are also eFGM copulas. 

A $d$-variate eFGM copula is specified by a vector of $d - 1$ parameters $(\theta_2, \dots, \theta_d) \in \mathcal{T}_d$ (as opposed to $2^d - d - 1$ for the complete class of $d$-variate FGM copulas) where, for $\{\varepsilon_{1},\dots, \varepsilon_{d}\} \in \{-1,1\}^d$, the following constraints are satisfied:
\begin{equation}\label{eq:constraints-exch}
	\mathcal{T}_d=\left\{(\theta_2, \dots, \theta_d) \in \mathbb{R}^{d-1} : 1+\sum_{k=2}^{d}\sum_{1\leq j_{1}<\cdots <j_{k}\leq d}\theta_{k}\varepsilon _{j_{1}}\dots
	\varepsilon _{j_{k}}\geq 0\right\},
\end{equation}
As the dimension $d$ increases, satisfying the $2^{d-1}$ constraints for the parameters $(\theta_2, \dots, \theta_d)$ in \eqref{eq:constraints-exch} becomes tedious. A preferable approach will rely on the following stochastic representation introduced in \cite{blier-wong2022stochasticb}.

\begin{theorem}\label{thm:stochastic-representation}
	The copula in \eqref{eq:eFGM-copula} has an equivalent representation 
	\begin{equation}\label{eq:stochastic-formulation}
		C(u_1, \dots, u_d) = \sum_{\{i_1, \dots, i_d\} \in \{0, 1\}^d} f_{I_1, \dots, I_d}(i_1, \dots, i_d) \prod_{m = 1}^d u_m \left(1 + (-1)^{i_m}\overline{u}_m\right),
	\end{equation}
	for $\left(u_1, \dots, u_d\right) \in [0, 1]^d$, where $f_{I_1, \dots, I_d}$ is the probability mass function (pmf) of $\left(I_1, \dots, I_d\right)$, an exchangeable, symmetric multivariate Bernoulli random vector with 
	\begin{equation}\label{eq:theta_from_I}
		\theta_k = (-2)^k E\left\{\prod_{j = 1}^k\left(I_{j} - \frac{1}{2}\right)\right\}, \quad k \in \{2, \dots, d\}.
	\end{equation}
\end{theorem}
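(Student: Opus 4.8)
The plan is to establish the equivalence in two steps. Step one is the algebraic identity that, for \emph{any} exchangeable symmetric multivariate Bernoulli vector $(I_1, \dots, I_d)$, the right-hand side of \eqref{eq:stochastic-formulation} equals the eFGM copula \eqref{eq:eFGM-copula} with the $\theta_k$ of \eqref{eq:theta_from_I}. Step two is the converse: every admissible $(\theta_2, \dots, \theta_d) \in \mathcal{T}_d$ arises from such a vector, so that $C_d$ indeed \emph{has} a representation of the form \eqref{eq:stochastic-formulation}. The one arithmetic fact used throughout is $(-1)^{I_m} = 1 - 2I_m = -2\bigl(I_m - \tfrac12\bigr)$.

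For step one, I would pull $\prod_{m=1}^d u_m$ out of each summand of \eqref{eq:stochastic-formulation} and expand $\prod_{m=1}^d\bigl(1 + (-1)^{i_m}\overline{u}_m\bigr) = \sum_{S \subseteq \{1, \dots, d\}}(-1)^{\sum_{m \in S} i_m}\prod_{m \in S}\overline{u}_m$. Since all sums are finite, swapping the sum over $(i_1, \dots, i_d)$ with the sum over $S$ rewrites the right-hand side of \eqref{eq:stochastic-formulation} as $\bigl(\prod_{m=1}^d u_m\bigr)\sum_{S \subseteq \{1,\dots,d\}}\bigl(\prod_{m \in S}\overline{u}_m\bigr)\,E\bigl[(-1)^{\sum_{m \in S} I_m}\bigr]$. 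The term $S = \emptyset$ is the constant $1$; each singleton $S = \{j\}$ contributes $\overline{u}_j\,E[(-1)^{I_j}] = \overline{u}_j\bigl(1 - 2E[I_j]\bigr) = 0$ because the vector is symmetric, i.e.\ has Bernoulli$(1/2)$ marginals; and for $|S| = k \ge 2$, exchangeability forces $E\bigl[(-1)^{\sum_{m\in S} I_m}\bigr]$ to depend only on $k$, hence to equal a common value $\theta_k$. Rewriting $(-1)^{\sum_{m \in S}I_m} = (-2)^k\prod_{m \in S}\bigl(I_m - \tfrac12\bigr)$ for $|S| = k$ turns this $\theta_k$ into the expression \eqref{eq:theta_from_I}, and collecting terms gives precisely \eqref{eq:eFGM-copula}. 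Incidentally, each summand of \eqref{eq:stochastic-formulation} is the cdf of a vector of independent coordinates with densities $2\overline{u}_m$ (for $i_m = 0$) and $2u_m$ (for $i_m = 1$); since $\sum_{(i_1,\dots,i_d)} f_{I_1,\dots,I_d} = 1$ and $E[(-1)^{I_m}] = 0$, the mixture \eqref{eq:stochastic-formulation} has uniform marginals and is $d$-increasing, i.e.\ is a copula — so any exchangeable symmetric Bernoulli vector automatically produces a parameter vector in $\mathcal{T}_d$.

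For step two, I would differentiate \eqref{eq:eFGM-copula} in each coordinate to obtain the eFGM density $c(u_1, \dots, u_d) = 1 + \sum_{k=2}^d \theta_k \sum_{1 \le j_1 < \dots < j_k \le d}\prod_{m=1}^k(1 - 2u_{j_m})$. Evaluating $c$ at a vertex $(i_1, \dots, i_d) \in \{0,1\}^d$ replaces $1 - 2u_{j_m}$ by $\varepsilon_{j_m} := 1 - 2i_{j_m} \in \{-1,1\}$, so the $2^{d-1}$ inequalities defining $\mathcal{T}_d$ in \eqref{eq:constraints-exch} say exactly that $c \ge 0$ at every vertex of the hypercube. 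Given $(\theta_2, \dots, \theta_d) \in \mathcal{T}_d$ I would therefore \emph{define} $f_{I_1, \dots, I_d}(i_1, \dots, i_d) := 2^{-d} c(i_1, \dots, i_d)$. This is nonnegative by \eqref{eq:constraints-exch}; it depends only on $i_1 + \dots + i_d$ — because $\sum_{|S| = k}\prod_{m \in S}\varepsilon_{j_m}$ does — so the resulting vector is exchangeable; and the elementary identity $\sum_{i_j \in \{0,1\}}(1 - 2i_j) = 0$ annihilates every $\theta_k$-term when $c$ is summed over all of $\{0,1\}^d$ or over a slice $\{i : i_j = 1\}$, yielding $\sum_i f_{I_1,\dots,I_d} = 1$ and $P(I_j = 1) = 1/2$. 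Hence $f_{I_1,\dots,I_d}$ is the pmf of an exchangeable symmetric multivariate Bernoulli vector, and feeding it into step one reproduces \eqref{eq:eFGM-copula} with the prescribed $\theta_k$; combined with step one this makes the correspondence between $\mathcal{T}_d$ and such pmfs a bijection.

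The expansion in step one is routine bookkeeping. The part that deserves care is step two, whose real content is recognizing the sign inequalities \eqref{eq:constraints-exch} as the nonnegativity of a genuine pmf — in effect, solving a finite exchangeable binary moment problem — and checking that ``symmetric'' is used in the sense of Bernoulli$(1/2)$ marginals, since that is precisely what kills the $k = 1$ terms and lets the sum in \eqref{eq:eFGM-copula} start at $k = 2$.
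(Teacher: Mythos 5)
Your proof is correct, but it takes a genuinely different route from the paper's. The paper's proof is essentially modular: it invokes Corollary 3.3 and Theorem 3.2 of \cite{blier-wong2022stochasticb} for the general correspondence between FGM copulas and symmetric multivariate Bernoulli vectors, and then only argues that equality of all $k$-dependence parameters forces the underlying Bernoulli vector to be exchangeable. You instead give a self-contained two-way argument: your forward direction, via the multilinear expansion and the identity $E\bigl[(-1)^{\sum_{m\in S}I_m}\bigr] = (-2)^{|S|}E\bigl[\prod_{m\in S}(I_m-\tfrac12)\bigr]$, independently re-derives what the paper only records later as Lemma \ref{lemma:eq-thetas}; and your converse explicitly constructs the pmf as $2^{-d}$ times the eFGM density evaluated at the vertices of $\{0,1\}^d$, recognizing the constraints \eqref{eq:constraints-exch} as exactly the nonnegativity of that candidate pmf, with exchangeability coming from the symmetry of the elementary symmetric polynomials and the Bernoulli$(1/2)$ marginals from the vanishing of $\sum_{i_j\in\{0,1\}}(1-2i_j)$. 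What your approach buys is transparency and constructiveness: the reader sees the bijection between $\mathcal{T}_d$ and exchangeable symmetric Bernoulli pmfs realized explicitly (the final identification of the coefficients being a Walsh-type orthogonality on the hypercube, which you gloss slightly but which is routine), and the Cambanis admissibility conditions acquire a probabilistic meaning, without consulting the companion paper. What the paper's version buys is brevity and consistency with its layered structure, at the cost of deferring all the substance to cited results. No gaps; the one step worth writing out fully in a final version is the orthogonality check that the $\theta_k$ recovered from your constructed pmf via \eqref{eq:theta_from_I} coincide with the $\theta_k$ you started from.
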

\begin{proof}
	The relationship in \eqref{eq:stochastic-formulation} applies to every FGM copula by Corollary 3.3 of \cite{blier-wong2022stochasticb}. For eFGM copulas, one must show that $\left(I_1, \dots, I_d\right)$ is an exchangeable Bernoulli random vector. By Theorem 3.2 of \cite{blier-wong2022stochasticb}, the relationship between the parameters $(\theta_2, \dots, \theta_d)$ and the underlying multivariate symmetric Bernoulli random vector for FGM copulas is $\theta_{j_1 \dots j_k} = (-2)^k E\left\{\prod_{l = 1}^k\left(I_{j_l} - \frac{1}{2}\right)\right\},$ for $1 \leq j_1 < \dots < j_k \leq d$ and $k \in \{2, \dots, d\}$. Hence, for $\theta_{j_1 \dots j_k} = \theta_{j_1' \dots j_k'}$ for all $1\leq j_{1}<\cdots <j_{k}\leq d$ and $1\leq j_{1}'<\cdots <j_{k}'\leq d$, we must have $f_{I_{j_{1}},\ldots,I_{j_{k}}}\left(i_{j_{1}},\ldots,i_{j_{k}}\right) = f_{I_{j_{1}'},\ldots,I_{j_{k}'}}\left(i_{j_{1}'},\ldots,i_{j_{k}'}\right)$ for all $1\leq j_{1}<\cdots <j_{k}\leq d$ and $1\leq j_{1}'<\cdots <j_{k}'\leq d$, with $k = \{2, \dots, d\}$, which the pmf of exchangeable Bernoulli random vectors satisfy.
\end{proof}

\section{Construction methods and examples}\label{sec:construction}

We first present a few methods to construct exchangeable symmetric Bernoulli rvs. Alternating between different construction methods, along with the stochastic representation of eFGM copulas in Theorem \ref{thm:stochastic-representation}, will enable us to study properties of eFGM copulas. 
\subsection{Construction based on the sum of Bernoulli rvs}\label{ss:construction-n}
We first define the (univariate) rv $N_d = \sum_{j = 1}^{d} I_j$, with support $\{0, 1, \dots, d\}$, representing the sum of exchangeable Bernoulli rvs. The relationship between the pmf of $(I_1, \dots, I_d)$ and $N_d$ is
\begin{align*}
	\pr(N_d = k) &= \sum_{\substack{\{i_1, \dots, i_d\} \in \{0, 1\}^d \\ i_\bullet = k}} \pr\left(I_1 = i_1, \dots, I_d = i_d\right)\\
	&= \binom{d}{k} \pr(I_1 = 1, \dots, I_k = 1, I_{k+1} = 0, \dots, I_d = 0),
\end{align*}
where $i_\bullet = \sum_{j = 1}^d i_j$, the second equality follows by exchangeability of $(I_1, \dots, I_d)$. 

Let $\mathcal{N}_d$ represent the class of pmfs for univariate random variables with support $\{0, \dots, d\}$ with mean $d/2$.
In Section 3.2 of \cite{fontana2021model}, the authors show that there is a one-to-one correspondence between the class of pmfs for exchangeable Bernoulli random vectors and $\mathcal{N}_d$. This construction is useful since it identifies the joint pmf of $(I_1, \dots, I_d)$ only through the pmf of $N_d$. 

\subsection{Construction based on a vector of probabilities}

One can specify the multivariate distribution of $(I_1, \dots, I_d)$ by the vector of probabilities $(\zeta_0, \zeta_1, \dots, \zeta_d)$, where $\zeta_0 = 1$ and $\zeta_k = \Pr(I_1 = 1, \dots, I_k = 1)$, $k \in \{1,\dots,d\}$ and $d \in \{1, 2, \dots\}$. For eFGM copulas, we require $\zeta_1 = 1/2$. We recall Theorem 1 from \cite{madsen1993generalized} which provides a sufficient condition for the values of $\zeta_k, k \in \{1, \dots, d\}$. 
\begin{theorem}\label{thm:madsen}
	Let $\psi(t)$ be a completely comonotone function for $t \geq 0$. If $\zeta_k = \psi(k)$, then $\pr(N_d = k) \geq 0$, for $k \in \{0, 1, \dots, d\}$. 
\end{theorem}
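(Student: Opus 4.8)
The plan is to write $\pr(N_d = k)$ explicitly in terms of the values $\zeta_0, \dots, \zeta_d$ by inclusion--exclusion, to recognize the resulting alternating sum as an iterated forward difference of $\psi$, and then to invoke the sign-alternation property that characterizes completely monotone functions.

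First, I would start from the identity already recorded above, namely
\begin{equation*}
	\pr(N_d = k) = \binom{d}{k}\, \pr(I_1 = 1, \dots, I_k = 1, I_{k+1} = 0, \dots, I_d = 0),
\end{equation*}
and apply inclusion--exclusion to the $d-k$ events $\{I_{k+1} = 0\}, \dots, \{I_d = 0\}$. By exchangeability of $(I_1, \dots, I_d)$, each choice of $j$ of the complementary events $\{I_m = 1\}$ contributes the same probability $\zeta_{k+j} = \pr(I_1 = 1, \dots, I_{k+j} = 1)$, so there are $\binom{d-k}{j}$ identical terms at level $j$. This yields
\begin{equation*}
	\pr(N_d = k) = \binom{d}{k}\sum_{j = 0}^{d-k} (-1)^j \binom{d-k}{j} \zeta_{k+j} = \binom{d}{k}\sum_{j = 0}^{d-k} (-1)^j \binom{d-k}{j} \psi(k+j).
\end{equation*}

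Next, I would rewrite the inner sum as an iterated forward difference. Writing $\Delta$ for the operator $(\Delta \psi)(t) = \psi(t+1) - \psi(t)$, the standard binomial identity gives $\sum_{j = 0}^{n} (-1)^j \binom{n}{j} \psi(t+j) = (-1)^n (\Delta^n \psi)(t)$, hence
\begin{equation*}
	\pr(N_d = k) = \binom{d}{k} (-1)^{d-k} (\Delta^{d-k} \psi)(k).
\end{equation*}
It then remains to show that $(-1)^n (\Delta^n \psi)(t) \geq 0$ for all integers $n \geq 0$ and $t \geq 0$. I would argue this either by iterating the mean value theorem --- each application of $-\Delta$ preserves nonnegativity because the hypothesis on $\psi$ gives $(-1)^m \psi^{(m)} \geq 0$ for every $m$ --- or, more transparently, through the Bernstein--Widder integral representation $\psi(t) = \int_0^\infty e^{-ts}\, \diff \mu(s)$ for a nonnegative measure $\mu$, under which
\begin{equation*}
	(-1)^n (\Delta^n \psi)(t) = \int_0^\infty e^{-ts}\,(1 - e^{-s})^n\, \diff \mu(s) \geq 0.
\end{equation*}
Combining the last two displays gives $\pr(N_d = k) \geq 0$, as claimed.

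The step requiring the most care is the inclusion--exclusion bookkeeping: one must apply exchangeability correctly so that every term at a given level $j$ collapses to the same $\zeta_{k+j}$, and track the binomial coefficient $\binom{d-k}{j}$ counting those terms. Once the alternating sum has been identified with the finite difference $(-1)^{d-k}(\Delta^{d-k}\psi)(k)$, the positivity is a classical property and involves no further difficulty; the only remaining point is to state precisely which equivalent formulation of the hypothesis on $\psi$ (sign-alternation of derivatives, or the integral representation) one uses to justify that final inequality.
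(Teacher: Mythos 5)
Your proof is correct. Note first that the paper itself gives no proof of this statement: it is recalled as Theorem 1 of Madsen (1993) and cited without argument, so there is nothing to match your proof against; what you have written is essentially the standard (and Madsen's) argument. Your inclusion--exclusion step reproduces exactly the identity the paper records later in the Remark citing George (1995), namely $\pr(N_d = k) = \binom{d}{k}\sum_{l=0}^{d-k}(-1)^l\binom{d-k}{l}\zeta_{k+l}$, and the identification of the alternating sum with $(-1)^{d-k}(\Delta^{d-k}\psi)(k)$ followed by the Bernstein--Widder representation (which the paper makes available via Feller's theorem in the same subsection; ``completely comonotone'' in the statement is evidently a typo for ``completely monotone'') correctly yields nonnegativity. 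One small point of logical hygiene: as the theorem is a \emph{sufficient condition} for the $\zeta_k$ to be consistent with some exchangeable Bernoulli vector, you should not literally ``start from'' $\pr(N_d=k)=\binom{d}{k}\pr(I_1=1,\dots,I_k=1,I_{k+1}=0,\dots,I_d=0)$ as if the vector were already given (in that reading the conclusion $\pr(N_d=k)\ge 0$ is vacuous). Rather, the inclusion--exclusion computation shows that any such vector would be forced to have $p_k := \binom{d}{k}\sum_{j}(-1)^j\binom{d-k}{j}\psi(k+j)$ as the pmf of $N_d$; the content of the theorem is that these candidate values are nonnegative, which is precisely what your finite-difference argument establishes from the hypothesis on $\psi$ alone. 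With that reordering of the exposition, the proof is complete.
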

The relationship between the values $(\zeta_0, \dots, \zeta_d)$, characterizing the multivariate distribution of a vector of $d$ exchangeable rvs $(I_1, \dots, I_d)$, and the values of the components of the vector of dependence parameters $(\theta_2, \dots, \theta_d)$ of the eFGM copula is established in the next result.
\begin{corollary}\label{cor:madsen}
	Let $d \in \{2, 3, \dots \}$ be fixed. For a given vector $(\zeta_0, \dots, \zeta_d)$ satisfying the conditions of Theorem \ref{thm:madsen} with $\zeta_0 = 1$, $\zeta_1 = 1/2$, we have 
	\begin{align}
		\theta_{k} &= (-2)^{k} \sum_{l = 0}^{k}\binom{k}{l} \zeta_{l}\left(-\frac{1}{2}\right)^{k-l} = \sum_{l = 0}^{k}\binom{k}{l} \zeta_{l}\left(-2\right)^{l},
		\quad k\in \{2,\ldots,d\}.\label{eq:exch-binome}
	\end{align}
\end{corollary}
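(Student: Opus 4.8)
The plan is to start from the relation $\theta_k = (-2)^k\, E\!\left\{\prod_{j=1}^k\left(I_j - \tfrac12\right)\right\}$ supplied by Theorem \ref{thm:stochastic-representation}, where $(I_1,\dots,I_d)$ is the exchangeable symmetric multivariate Bernoulli vector whose finite-dimensional probabilities $\zeta_k = \pr(I_1 = 1,\dots,I_k = 1)$ are prescribed; the conditions of Theorem \ref{thm:madsen} with $\zeta_0 = 1$ guarantee that such a random vector exists (the condition $\zeta_1 = 1/2$ ensures the marginals are symmetric, and completely comonotone $\psi$ with $\zeta_k = \psi(k)$ ensures $\pr(N_d = k)\ge 0$). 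So the corollary is really an algebraic identity that expresses the raw ``moments'' appearing in \eqref{eq:theta_from_I} in terms of the $\zeta_l$.

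Next I would expand the product over subsets: $\prod_{j=1}^k\left(I_j - \tfrac12\right) = \sum_{S \subseteq \{1,\dots,k\}} \left(-\tfrac12\right)^{k - |S|} \prod_{j \in S} I_j$, with the convention that the empty product equals $1$. Because each $I_j$ takes values in $\{0,1\}$, the product $\prod_{j\in S} I_j$ is the indicator of the event $\{I_j = 1 \text{ for all } j \in S\}$, so $E\!\left[\prod_{j\in S} I_j\right] = \pr(I_j = 1,\, j \in S)$; by exchangeability of $(I_1,\dots,I_d)$ this probability depends on $S$ only through its cardinality, hence equals $\zeta_{|S|}$ (and $\zeta_0 = 1$ covers $S = \emptyset$). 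Taking expectations termwise and grouping the $\binom{k}{l}$ subsets of size $l$ together yields $E\!\left\{\prod_{j=1}^k\left(I_j - \tfrac12\right)\right\} = \sum_{l=0}^k \binom{k}{l}\zeta_l\left(-\tfrac12\right)^{k-l}$.

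Multiplying by $(-2)^k$ gives the first expression in \eqref{eq:exch-binome}, and the second follows from the identity $(-2)^k\left(-\tfrac12\right)^{k-l} = (-2)^k(-2)^{-(k-l)} = (-2)^l$ applied inside the sum. Both forms hold for every $k \in \{2,\dots,d\}$.

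There is no real obstacle here: the argument is a one-line expansion of a product plus an application of exchangeability. The only point worth stating carefully is the reduction $E\!\left[\prod_{j\in S} I_j\right] = \zeta_{|S|}$, which relies simultaneously on the $0$--$1$ nature of the $I_j$ (so products become indicators of joint events) and on exchangeability (so the joint probability depends only on $|S|$); everything else is bookkeeping with binomial coefficients and the cancellation of powers of $-2$.
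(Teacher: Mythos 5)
Your proof is correct and follows essentially the same route as the paper: expand $\prod_{j=1}^k\bigl(I_j-\tfrac12\bigr)$ over subsets, use the $0$--$1$ nature of the $I_j$ together with exchangeability to reduce $E\bigl(I_{j_1}\cdots I_{j_l}\bigr)$ to $\zeta_l$, and collect the $\binom{k}{l}$ terms of each size. If anything, your write-up is slightly more explicit than the paper's about why the expectation of a product of indicators equals $\zeta_{|S|}$ and about the cancellation $(-2)^k(-\tfrac12)^{k-l}=(-2)^l$.
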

\begin{proof}
	Expanding the product in \eqref{eq:theta_from_I} yields
	\begin{align*}
		\theta_k &= (-2)^k E\left\{\left(-\frac{1}{2}\right)^k + \sum_{l = 1}^k E\left(I_l\right)\left(-\frac{1}{2}\right)^{k-1} + \sum_{l = 2}^k \sum_{1 \leq j_1 < \dots < j_l \leq k} I_{j_1} \dots I_{j_l} \left(-\frac{1}{2}\right)^{k - l} \right\}.
	\end{align*}
	Since $\left(I_1, \dots, I_d\right)$ are exchangeable random variables, one has $E\left(I_{j_1}\dots I_{j_k}\right) = \zeta_k$ for all $k$-dimensional vectors $(j_1,\dots,j_k)$ such that $1 \leq j_1 < \dots < j_k \leq d$ and $k \in \{2,\dots,d\}$. 
\end{proof}
Since \eqref{eq:exch-binome} does not depend on $d$, the first $k$-dependence parameters from Corollary \ref{cor:madsen} are
\begin{equation}
	 	\theta_{k} = \begin{cases}
		 		4\zeta_2 - 1,& k = 2, ~d \geq 2\\
		 		-8\zeta_3 + 12 \zeta_2 - 2, & k = 3, ~d\geq 3\\
		 		16\zeta_4 - 32 \zeta_3 + 24\zeta_2 - 3, & k = 4, ~d \geq 4\\
		 		-32\zeta_5 + 80\zeta_4 - 80\zeta_3 + 40\zeta_2 - 4, & k = 5, ~d \geq 5
		 	\end{cases}.
\end{equation}
\begin{example}[Model 3 of \cite{madsen1993generalized}]
  Madsen considered the model $\zeta_k = \beta + (1 - \beta)\alpha^k$, for $(\alpha, \beta) \in [0, 1]^2$ and $k \in \{0, 1, \dots\}$. With the constraint that $\zeta_1 = 1/2$, we have $\alpha = (1/2-\beta)/(1 - \beta)$ for $\beta \in [0, 0.5]$ and we have one free parameter $\beta$, which is the dependence parameter. Inserting these probabilities in \eqref{eq:exch-binome} yields $\theta_k = \beta(-1)^k + (1 - \beta)(1 - (1-2\beta)/(1 - \beta))^k$, for $k \in \{0, 1, \dots\}$. The case $\beta = 0$ yields the independence copula and $\beta = 0.5$ yields the extreme positive dependence copula $C^{EPD}$ that we will describe in Theorem \ref{thm:epd}.
\end{example}
A more convenient way of specifying the values of $\zeta_k$ for $k = 0, 1, \dots$ uses Laplace-Stieltjes transforms (LST). Let us first recall Theorem 1a of Section XIII.4 in \cite{feller1968introduction}.
\begin{theorem}
	If $\psi(0) = 1$ and $\psi$ is completely comonotone, then $\psi$ is the LST of a strictly positive rv $Y$, that is, $\psi(t) = \mathcal{L}_Y(t) = E(e^{-Yt})$.
\end{theorem}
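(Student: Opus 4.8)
This is Bernstein's theorem (the Hausdorff--Bernstein--Widder theorem), and the plan is to reconstruct the representing measure by sampling $\psi$ on dyadic lattices, solving the (Hausdorff) moment problem on each lattice, and then passing to a weak limit. Complete comonotonicity, meaning $(-1)^n\psi^{(n)}(t)\ge 0$ for $t>0$ and all $n$, already forces $\psi$ to be nonnegative, nonincreasing and convex; hence $\psi(0+)=1$ by the normalization, and $c:=\lim_{t\to\infty}\psi(t)$ exists in $[0,1]$. The atom of the eventual measure at the origin will equal $c$, so a ``strictly positive'' $Y$ corresponds to $c=0$; otherwise one gets $Y\ge 0$, which I carry along.

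\emph{Sampling and the moment problem.} Fix $h>0$ and set $a_n=\psi(nh)$, $n\ge 0$. Writing the $k$-th forward difference with step $h$ as a $k$-fold integral of $(-1)^k\psi^{(k)}\ge 0$ shows $(-1)^k\Delta_h^k a_n\ge 0$ for all $k,n$, so $(a_n)_{n\ge 0}$ is a Hausdorff moment sequence with $a_0=1$; by Hausdorff's theorem there is a unique probability measure $\nu_h$ on $[0,1]$ with $a_n=\int_0^1 s^n\,d\nu_h(s)$. The substitution $s=e^{-hx}$, a homeomorphism of $[0,\infty]$ onto $[0,1]$, transports $\nu_h$ to a probability measure $\mu_h$ on the compact space $[0,\infty]$ with $\psi(nh)=\int_{[0,\infty]}e^{-nhx}\,d\mu_h(x)$ for every $n\ge 0$; since $x\mapsto e^{-nhx}$ vanishes at $\infty$ for $n\ge 1$, this says $\psi=\mathcal L_{\mu_h}$ on the lattice $h\mathbb Z_{\ge 0}$.

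\emph{Passing to the limit.} Take $h=2^{-m}$ and $\mu_m:=\mu_{2^{-m}}$; the lattices $2^{-m}\mathbb Z_{\ge 0}$ are nested and their union $D$ is dense in $[0,\infty)$. Because $[0,\infty]$ is compact, Helly's selection theorem yields a subsequence $\mu_{m_j}$ converging weakly to a probability measure $\mu$ on $[0,\infty]$. Each fixed $t\in D$ lies in all but finitely many of the lattices, so $\psi(t)=\mathcal L_{\mu_{m_j}}(t)$ for large $j$, and since $x\mapsto e^{-tx}$ is bounded and continuous on $[0,\infty]$ we pass to the limit: $\psi(t)=\int_{[0,\infty]}e^{-tx}\,d\mu(x)=\mathcal L_\mu(t)$. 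Both sides being continuous on $(0,\infty)$ and agreeing on the dense set $D$, we get $\psi=\mathcal L_\mu$ throughout $(0,\infty)$. Finally, letting $t\downarrow 0$ gives $\mathcal L_\mu(t)\uparrow\mu([0,\infty))$, which must equal $\psi(0+)=1$, so $\mu$ charges no mass at $\infty$ and is an honest probability measure on $[0,\infty)$; $Y\sim\mu$ is the required rv, strictly positive precisely when $c=0$, and its law is unique since the LST is injective.

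\emph{Where the work is.} The substantive input is Hausdorff's moment theorem itself (provable by Bernstein polynomials); everything after it is soft, the one subtlety being to rule out escape of mass to $\infty$ in the limit, which is exactly where the hypothesis $\psi(0+)=1$ is used. A more self-contained alternative, staying on $(0,\infty)$, is the Post--Widder route: with $F_m(x)=\sum_{0\le k\le mx}\frac{(-m)^k}{k!}\psi^{(k)}(m)$, one checks from complete comonotonicity that $F_m$ is nondecreasing with $F_m(0+)=\psi(m)$ and $F_m(\infty)=\psi(0+)=1$, extracts a weakly convergent subsequence, and verifies $\int_0^\infty e^{-tx}\,dF_m(x)\to\psi(t)$; there the analytic estimates behind that last convergence are the real content.
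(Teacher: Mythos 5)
Your argument is correct, but note that the paper offers no proof of this statement at all: it is recalled verbatim as Theorem 1a of Section XIII.4 of Feller (1968) and used as a black box, so there is nothing internal to compare against. What you have written is essentially the textbook proof of the Hausdorff--Bernstein--Widder theorem (indeed close to Feller's own): reduce to the Hausdorff moment problem by sampling on a lattice of step $h$, where the sign condition $(-1)^k\Delta_h^k\psi(nh)\ge 0$ follows from writing the $k$-th forward difference as a $k$-fold integral of $(-1)^k\psi^{(k)}\ge 0$; push the representing measure to the compactification $[0,\infty]$ via $s=e^{-hx}$; use Helly on the nested dyadic lattices; and use $\psi(0+)=1$ to kill the mass at $\infty$. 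All of these steps are sound, and you correctly isolate the only delicate point (escape of mass to infinity). One remark worth keeping: as you observe, the hypotheses as stated only yield a nonnegative $Y$, with $\Pr(Y=0)=\lim_{t\to\infty}\psi(t)$; the paper's phrase ``strictly positive rv'' is accurate only under the additional condition $\psi(t)\to 0$, so your decision to carry $Y\ge 0$ and flag the atom at the origin is the honest reading of the theorem, and is consistent with Feller's original formulation in terms of a distribution concentrated on $[0,\infty)$.
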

\begin{corollary}\label{corr:zeta-laplace}
	Setting $\zeta_k, k = 0, 1, \dots, d$ with $\zeta_k = \mathcal{L}_Y(rk)$, for $r > 0$ will generate probability values which satisfy the conditions of Theorem \ref{thm:madsen}. For a symmetric multivariate exchangeable Bernoulli random vector, we have $\zeta_1 = \mathcal{L}_Y(r) = 1/2$, implying $\zeta_k = \mathcal{L}_Y\left(k \times \mathcal{L}_Y^{-1}\left(1/2\right)\right)$, for $k = 1, \dots, d$. 
\end{corollary}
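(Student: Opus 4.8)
The plan is to verify the two assertions separately: that the choice $\zeta_k = \mathcal{L}_Y(rk)$ meets the hypotheses of Theorem \ref{thm:madsen}, and that the symmetry normalization $\zeta_1 = 1/2$ determines $r$ uniquely.

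First I would put $\psi(t) = \mathcal{L}_Y(rt) = E(e^{-Yrt})$, with $Y$ a strictly positive random variable as in the Feller theorem stated just above, and check the two properties Theorem \ref{thm:madsen} needs. The value at the origin is immediate, $\psi(0) = 1$, which in particular gives $\zeta_0 = 1$. For complete comonotonicity on $[0,\infty)$, I would differentiate under the expectation (legitimate by dominated convergence, since near any $t_0 > 0$ the $n$-th $t$-derivative of $e^{-Yrt}$ is bounded uniformly in $Y$) to obtain $\psi^{(n)}(t) = (-r)^n E\!\left(Y^n e^{-Yrt}\right)$, hence $(-1)^n \psi^{(n)}(t) = r^n E\!\left(Y^n e^{-Yrt}\right) \geq 0$ for every $n \geq 0$ and $t > 0$; the only role of $r > 0$ here is that the factor $r^n$ is positive and so does not change the sign. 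With $\psi$ completely comonotone and $\zeta_k = \psi(k) = \mathcal{L}_Y(rk)$, Theorem \ref{thm:madsen} yields $\pr(N_d = k) \geq 0$ for all $k \in \{0, 1, \dots, d\}$, so this sequence indeed characterizes an exchangeable symmetric Bernoulli vector.

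Second I would bring in the symmetry requirement. For a symmetric multivariate exchangeable Bernoulli vector, $I_1 \sim \mathrm{Bernoulli}(1/2)$, so $\zeta_1 = \pr(I_1 = 1) = 1/2$, i.e. $\mathcal{L}_Y(r) = 1/2$. To see that this pins down $r = \mathcal{L}_Y^{-1}(1/2)$, I would use strict positivity of $Y$: then $\mathcal{L}_Y$ is continuous and strictly decreasing on $[0,\infty)$ with $\mathcal{L}_Y(0) = 1$ and $\mathcal{L}_Y(t) \to 0$ as $t \to \infty$, so it is a bijection from $[0,\infty)$ onto $(0,1]$. Since $1/2 \in (0,1]$, there is a unique $r > 0$ with $\mathcal{L}_Y(r) = 1/2$, namely $r = \mathcal{L}_Y^{-1}(1/2)$, and substituting this back into $\zeta_k = \mathcal{L}_Y(rk)$ gives $\zeta_k = \mathcal{L}_Y\!\left(k\,\mathcal{L}_Y^{-1}(1/2)\right)$ for $k = 1, \dots, d$.

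There is no serious obstacle, the corollary being a chain of standard facts; the one point that warrants care is the well-definedness of $\mathcal{L}_Y^{-1}(1/2)$ --- that $1/2$ actually lies in the range of $\mathcal{L}_Y$ and is attained at a single value of $t$ --- which is precisely where strict positivity of $Y$ (hence strict decrease of $\mathcal{L}_Y$ all the way down to $0$) is needed. A secondary bookkeeping point is justifying the interchange of differentiation and expectation that transfers complete comonotonicity from $\mathcal{L}_Y$ to $t \mapsto \mathcal{L}_Y(rt)$.
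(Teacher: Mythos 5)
Your proof is correct; the paper itself supplies no proof of this corollary, treating it as an immediate consequence of Theorem \ref{thm:madsen} and the preceding Feller theorem, and your argument is exactly the natural verification: complete monotonicity of $t \mapsto \mathcal{L}_Y(rt)$ with $\psi(0)=1$, followed by strict monotonicity and continuity of $\mathcal{L}_Y$ (onto $(0,1]$ by strict positivity of $Y$) to justify inverting at $1/2$. Nothing is missing.
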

\begin{remark}
	As shown in \cite{george1995full}, the constructions based on $N_d$ from Subsection \ref{ss:construction-n} and based on the vector of probabilities $(\zeta_0, \dots, \zeta_d)$ of the current subsection are equivalent and the following relationship holds:
	$$\pr(N_d = k) = \binom{d}{k} \sum_{l = 0}^{d-k} (-1)^l \binom{d - k}{l}\zeta_{k+l}, \quad k \in \{0, 1, \dots, d\}.$$
\end{remark}

\subsection{Construction with finite mixtures}
One can also construct exchangeable Bernoulli distributions using finite mixtures, that is, 
\begin{equation}\label{eq:bernoulli-mixture}
	\pr(I_1 = i_1, \dots, I_d = i_d) = \int_{0}^{1} \pr(I_1 = i_1, \dots, I_d = i_d \vert \Lambda = \lambda) \diff F_{\Lambda}(\lambda),
\end{equation}
where $\Lambda$ is a mixing random variable defined on $[0, 1]$. According to \eqref{eq:bernoulli-mixture}, conditional on the mixing rv $\Lambda$, $\left(I_1, \dots, I_d\right)$ are conditionally independent. One must select a distribution for $\Lambda$ such that $E\left(\Lambda\right) = 1/2$. From \eqref{eq:bernoulli-mixture}, it follows that
\begin{align}
	 \zeta_k &= \pr(I_1 = 1, \dots, I_k = 1) = \int_0^1 \lambda^k dF_{\Lambda}(\lambda) = E\left(\Lambda^k\right), \quad k \in \{0, \dots, d\};\\
	 f_{I_1, \dots, I_d}(i_1, \dots, i_d) &= \int_0^1 \lambda^{i_\bullet}(1 - \lambda)^{d - i_\bullet} \diff F_{\Lambda}(\lambda) = E\left\{\Lambda^{i_\bullet}\left(1-\Lambda\right)^{d-i_\bullet}\right\}, \quad (i_1, \dots, i_d) \in \{0, 1\}^d.\label{eq:pmf-beta}
\end{align}
For $k \in \{2,\ldots,d\}$, combining \eqref{eq:theta_from_I} and \eqref{eq:bernoulli-mixture} leads to 
\begin{equation}\label{eq:exch-esp-theta}
	\theta_k = (-2)^k E_{\Lambda}\left[E\left\{\left.\prod_{j = 1}^k \left(I_j - \frac{1}{2}\right)\right\vert \Lambda\right\} \right] = (-2)^k E\left\{\left(\Lambda - \frac{1}{2}\right)^k\right\}.
\end{equation}
\begin{remark}
	Assuming $(I_1, \dots, I_d)$ is a subsequence from an infinite sequence of positively correlated exchangeable Bernoulli trials, the famous result from de Finetti (\cite{de1929funzione}) states that there exists a rv $\Lambda$ such that \eqref{eq:bernoulli-mixture} holds. See also \cite{diaconis1977finite} for extensions when $(I_1, \dots, I_d)$ is a subsequence from a finite sequence of exchangeable Bernoulli trials.
\end{remark}
\begin{remark}
	Using the finite mixture construction and \eqref{eq:exch-esp-theta}, one can interpret the copula parameters from the mixing rv. We have $\theta_{2}\propto Var(\Lambda)$, which implies that the variance of the mixing rv induces the pairwise dependence. Then, since $\theta_{3} \propto - E[\{\Lambda - E(\Lambda)\}^3]$, we interpret $\theta_{3}$ as proportional to the negative of the skewness. If the density function of $\Lambda$ is symmetric about the mean (skewness of 0), then $\theta_{k} = 0$ when $k$ is odd. When $k$ is even, we have $E[\{\Lambda - E(\Lambda)\}^k] \geq 0$, implying $\theta_{k} \geq 0$, so the finite mixture construction induces positive dependence. 
\end{remark}

A family of distributions for $\Lambda$ will generate a family of eFGM copulas. The following example presents the beta-eFGM family of copulas with $\Lambda$ set to follow a Beta distribution. 
\begin{example}\label{exa:definetti-beta}
	Let $\Lambda \sim Beta(\alpha, \alpha)$ for $\alpha > 0$ with probability density function
	$$f_{\Lambda}(\lambda) = \frac{[\lambda(1-\lambda)]^{\alpha - 1}}{B(\alpha, \alpha)}, \quad 0\leq \lambda \leq 1,$$
	in which case $\zeta_1 = E\left(\Lambda\right) = 1/2$. The only parameter is $\alpha$, hence it acts as a dependence parameter for this family of eFGM copulas. The pmf in \eqref{eq:bernoulli-mixture} becomes
	\begin{equation}\label{eq:pmf-beta-bern}
		f_{I_1, \dots, I_d}(i_1, \dots, i_d) = \frac{\Gamma(2\alpha)}{\Gamma(\alpha)^2} \frac{\Gamma(2\alpha + d)}{\Gamma\left(\alpha + i_\bullet\right)\Gamma\left(\alpha + d - i_\bullet\right)}
		= \frac{B\left(\alpha + i_\bullet, \alpha + d - i_\bullet\right)}{B(\alpha, \alpha)},
	\end{equation}
	where $B(a, b)$ is the Beta function $\Gamma(a)\Gamma(b)/\Gamma(a+b)$, see Chapter 7 of \cite{joe1997multivariate} for details. Note that the distribution of $N_d$ when $(I_1, \dots, I_d)$ has pmf \eqref{eq:pmf-beta-bern} is called the Beta-Binomial distribution. From \eqref{eq:exch-esp-theta}, the dependence parameters are $\theta_k = B(\alpha+1/2, (k+1)/2)/B(\alpha+(k+1)/2, 1/2)$ for $k = 2, 4, 6, \dots$ and $\theta_k = 0$ for $k = 3, 5, 7, \dots$. We provide a proof in Appendix \ref{app:proof-beta-exch}. The beta-eFGM copula is
	 	\begin{equation} \label{eq:beta-copule}
		 		C\left(u_1, \dots, u_d\right) = \prod_{j = 1}^{d} u_j \left(1 + \sum_{l = 1}^{\left\lfloor \frac{d}{2} \right\rfloor}\sum_{1\leq j_{1}<\cdots <j_{2l}\leq d} \frac{B(\alpha+1/2, l + 1/2)}{B(\alpha+l+1/2, 1/2)}\overline{u}_{j_1}\cdots \overline{u}_{j_{2l}}\right),
		 	\end{equation}
	 	for $\left(u_1, \dots, u_d\right) \in [0,1]^{d}$. 
	 	We have the following representations for the dependence parameters when $k$ even:
	 	\begin{equation}
		 		\theta_{k} = \begin{cases}
			 		\frac{1}{2\alpha + 1},& k = 2, d \geq 2\\
			 		\frac{3}{(2\alpha + 1)(2\alpha + 3)}, & k = 4, d \geq 4\\
			 		\frac{15}{(2\alpha + 1)(2\alpha + 3)(2\alpha + 5)}, & k = 6, d \geq 6\\
			 	\end{cases}; \qquad \theta_{k} = \prod_{l = 1}^{k/2} \frac{2l - 1}{2\alpha + 2l - 1}, \quad k \in \{2, 4, 6, \dots\}, d \geq k.
		 	\end{equation}
	 	For $\alpha \downarrow 0$, then $\Lambda \sim Beta(\alpha, \alpha)$ converges to a discrete distribution with $\pr(\Lambda = 0) = \pr(\Lambda = 1) = 1/2$. The resulting FGM copula becomes the extreme positive dependence copula $C^{EPD}$ that we will describe in Theorem \ref{thm:epd}. When $\alpha \uparrow \infty$, then $\Lambda \sim Beta(\alpha, \alpha)$ converges to a discrete distribution with $\pr(\Lambda = 1/2) = 1$. Then, $E\left\{(\Lambda - 1/2)^k\right\} = 0$ for $k \in \{2, 3, \dots\}$ and the corresponding FGM copula is the independence copula. 
\end{example}

\section{Extreme points of exchangeable FGM copulas}\label{ss:rays}

In this section, we show that the parameters of any eFGM copula can be expressed as a convex combination of linearly independent parameters of eFGM copulas. More precisely, in any dimension $d \in \{2, 3, \dots \}$, we find convex hulls for the dependence parameters for the class of $d$-variate eFGM copulas. We call each vertex in the hull extreme points, and we seek the extreme points of $\mathcal{T}_d$, which are the extreme points from the set of inequalities in \eqref{eq:constraints-exch}. See, for example, Section 18 of \cite{rockafellar1970convex} for the relationship between convex hulls and extreme points, and \cite{terzer2009large} for details on extremal rays of convex cones. 
\begin{theorem}\label{prop:convex-exch}
	Convex combinations of eFGM copula parameters are eFGM copula parameters.
\end{theorem}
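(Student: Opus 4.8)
The plan is to recognize $\mathcal{T}_d$ as a polyhedron — a finite intersection of closed half-spaces — hence convex, and then to transfer this to the level of copulas using the fact that the eFGM copula in \eqref{eq:eFGM-copula} depends affinely on $(\theta_2,\dots,\theta_d)$. So the statement is, at bottom, the assertion that $\mathcal{T}_d$ is a convex subset of $\mathbb{R}^{d-1}$.

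First I would fix $d$ and rewrite the admissibility conditions \eqref{eq:constraints-exch} as $h_{\boldsymbol\varepsilon}(\theta_2,\dots,\theta_d)\ge 0$, one inequality for each sign vector $\boldsymbol\varepsilon=(\varepsilon_1,\dots,\varepsilon_d)\in\{-1,1\}^d$, where
\[
  h_{\boldsymbol\varepsilon}(\theta_2,\dots,\theta_d)=1+\sum_{k=2}^d c_k(\boldsymbol\varepsilon)\,\theta_k,
  \qquad
  c_k(\boldsymbol\varepsilon)=\sum_{1\le j_1<\dots<j_k\le d}\varepsilon_{j_1}\cdots\varepsilon_{j_k}.
\]
For a fixed $\boldsymbol\varepsilon$ each $c_k(\boldsymbol\varepsilon)$ is a constant (an elementary symmetric polynomial evaluated at $\pm 1$), so $h_{\boldsymbol\varepsilon}$ is an affine functional of $(\theta_2,\dots,\theta_d)$ and $\mathcal{T}_d=\bigcap_{\boldsymbol\varepsilon}\{\theta\in\mathbb{R}^{d-1}:h_{\boldsymbol\varepsilon}(\theta)\ge 0\}$.

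Next I would verify convexity on a pair: for $\theta^{(1)},\theta^{(2)}\in\mathcal{T}_d$ and $\lambda\in[0,1]$, affinity of $h_{\boldsymbol\varepsilon}$ together with $\lambda\cdot 1+(1-\lambda)\cdot 1=1$ gives
\[
  h_{\boldsymbol\varepsilon}\!\left(\lambda\theta^{(1)}+(1-\lambda)\theta^{(2)}\right)
  =\lambda\,h_{\boldsymbol\varepsilon}\!\left(\theta^{(1)}\right)+(1-\lambda)\,h_{\boldsymbol\varepsilon}\!\left(\theta^{(2)}\right)\ge 0
\]
for every $\boldsymbol\varepsilon$, since $\lambda,1-\lambda\ge 0$ and both terms are nonnegative; hence $\lambda\theta^{(1)}+(1-\lambda)\theta^{(2)}\in\mathcal{T}_d$. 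A one-line induction then extends this to any finite convex combination $\sum_i\lambda_i\theta^{(i)}$ with $\lambda_i\ge 0$ and $\sum_i\lambda_i=1$. Finally, because \eqref{eq:eFGM-copula} is affine in $(\theta_2,\dots,\theta_d)$, the eFGM copula associated with $\sum_i\lambda_i\theta^{(i)}$ equals $\sum_i\lambda_i C_d^{(i)}$, the corresponding convex combination of the eFGM copulas; so the statement also holds verbatim at the copula level.

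I do not expect a genuine obstacle here: the only subtlety is that the defining functionals $h_{\boldsymbol\varepsilon}$ are affine rather than linear, so one must use $\sum_i\lambda_i=1$ to keep the constant term $1$ intact, after which the claim is the textbook fact that an intersection of half-spaces is convex. As an alternative route I could argue through the stochastic representation of Theorem \ref{thm:stochastic-representation}: a convex combination of pmfs of exchangeable symmetric multivariate Bernoulli vectors is again such a pmf, and each $\theta_k$ in \eqref{eq:theta_from_I} is a linear functional of that pmf, so $\mathcal{T}_d$ is the image of a convex set under a linear map; I would mention this but present the direct polyhedral argument as the proof.
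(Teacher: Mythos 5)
Your proof is correct and is essentially the paper's own argument: both hinge on the observation that each constraint in \eqref{eq:constraints-exch} is affine in $(\theta_2,\dots,\theta_d)$, so that $1+\sum_{k}\sum\theta_k\varepsilon_{j_1}\cdots\varepsilon_{j_k}=\sum_m\lambda_m\bigl(1+\sum_k\sum\theta_{m,k}\varepsilon_{j_1}\cdots\varepsilon_{j_k}\bigr)$ is a nonnegative combination of nonnegative terms, the constant $1$ being preserved because the weights sum to one. The paper simply writes this identity directly for a general $n_d$-point combination rather than doing pairs plus induction; your additional remarks on the half-space framing and the copula-level convex combination are fine but not needed.
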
 
\begin{proof}
	Consider a vector of probabilities $(\lambda_1, \dots, \lambda_{n_d})$ such that $\lambda_m \geq 0$, for $m \in \{1, \dots, n_d\}$ and $\lambda_1 + \dots + \lambda_{n_d} = 1$ with the notation that $\theta_{m, k}$ is the $k$-dependence parameter for the $m$th set of parameters. Consider the parameters $(\theta_{1,2}, \dots, \theta_{1,d}), \dots, (\theta_{n_d,2}, \dots, \theta_{n_d,d})$. A convex combination of eFGM copulas has parameters $\theta_k = \sum_{m = 1}^{n_d} \lambda_m \theta_{m, k}$, $k \in \{2, \dots, d\}$. One must then verify that the constraints \eqref{eq:constraints-exch} remain satisfied, indeed,
	\begin{align*}
		1+\sum_{k=2}^{d}\sum_{1\leq j_{1}<\dots <j_{k}\leq d}\theta_k\varepsilon _{j_{1}}\dots \varepsilon _{j_{k}}&= 1+\sum_{k=2}^{d}\sum_{1\leq j_{1}<\dots <j_{k}\leq d}\sum_{m = 1}^{n_d} \lambda_m\theta_{m, k}\varepsilon _{j_{1}}\dots \varepsilon _{j_{k}}\nonumber\\
		&=\sum_{m = 1}^{n_d} \lambda_m \left(1+\sum_{k=2}^{d}\sum_{1\leq j_{1}<\dots <j_{k}\leq d}\theta_{m, k}\varepsilon _{j_{1}}\dots \varepsilon _{j_{k}}\right),
	\end{align*}
	for $\{\varepsilon_{1},\dots, \varepsilon_{d}\} \in \{-1,1\}^d$. Since $\lambda_m \geq 0$ for $m \in \{1, \dots, n_d\}$, every summand above satisfies \eqref{eq:constraints-exch}, implying that $(\theta_2, \dots, \theta_d)$ also satisfies \eqref{eq:constraints-exch}.
\end{proof}

Let $(\lambda_1, \dots, \lambda_{n_d})$ be a vector of probabilities such that $\lambda_m \geq 0, m \in \{1, \dots, n_d\}$, and $\lambda_1 + \dots + \lambda_{n_d} = 1$. The pmfs in $\mathcal{N}_d$ can be expressed as a convex combination of $n_d$ extreme points, that is, 
\begin{equation}\label{eq:decomp-nd}
	\pr\left(N_d = k\right) = \sum_{j = 1}^{n_d} \lambda_j \pr\left(N_{jd} = k\right), \quad k = 0, \dots, d, 
\end{equation}
where $N_{jd}\in \mathcal{N}_d$ is a rv with pmf corresponding to the $j$\textsuperscript{th} extreme point, $j \in \{1, \dots, n_d\}$.
\begin{proposition}
	The pmfs of the rvs $N_{jd}$ corresponding to the extreme points of $\mathcal{N}_d$ are
	\begin{equation}\label{eq:extr-rays-pj}
		\pr(N_{jd} = k) = \begin{cases}
			\frac{j_2 - d/2}{j_2 - j_1}, & k = j_1\\
			\frac{d/2 - j_1}{j_2 - j_1}, & k = j_2\\
			0, & \text{otherwise}
		\end{cases}
	\end{equation}
	for every combination of $j_1 \in \{0, 1, \dots, j_1^\wedge\}$ and $j_2 \in \{j_2^\vee, j_2^\vee + 1, \dots, d\}$, where $(j_1^\wedge, j_2^\vee) = ((d - 1)/2,\allowbreak (d + 1)/2)$, if $d$ is odd, and $(d/2 - 1, d/2 + 1)$, if $d$ is even. If $d$ is even, the extreme points also contains the one-point distribution at $k = d/2$.
\end{proposition}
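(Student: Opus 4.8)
The plan is to view $\mathcal{N}_d$ as a polytope in $\mathbb{R}^{d+1}$ and read off its vertices directly. Writing $p_k$ for $\pr(N_d = k)$, a pmf in $\mathcal{N}_d$ is a point $(p_0, \dots, p_d)$ satisfying $p_k \geq 0$ for all $k$ together with the two equalities $\sum_{k=0}^d p_k = 1$ and $\sum_{k=0}^d k\,p_k = d/2$. The coefficient matrix of these two equalities has rows $(1, 1, \dots, 1)$ and $(0, 1, \dots, d)$ and hence rank $2$ for every $d \geq 1$, so by the standard identification of the extreme points of a set of the form $\{x : Ax = b,\ x \geq 0\}$ with its basic feasible solutions (Section 18 of \cite{rockafellar1970convex}), every extreme point of $\mathcal{N}_d$ has at most two nonzero coordinates. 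Thus it suffices to analyse supports of size one and of size two.

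The first step is the support-size-one case: if $p_j = 1$ and $p_k = 0$ for $k \neq j$, the mean constraint forces $j = d/2$, which is possible only when $d$ is even, and produces exactly the one-point distribution at $d/2$ listed in the statement. The second step is the case of support $\{j_1, j_2\}$ with $j_1 < j_2$: the two equalities become $p_{j_1} + p_{j_2} = 1$ and $j_1 p_{j_1} + j_2 p_{j_2} = d/2$, a nonsingular $2\times 2$ system (since $j_1 \neq j_2$) whose unique solution is $p_{j_1} = (j_2 - d/2)/(j_2 - j_1)$ and $p_{j_2} = (d/2 - j_1)/(j_2 - j_1)$, i.e.\ precisely \eqref{eq:extr-rays-pj}. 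This solution is a probability vector iff $j_1 \leq d/2 \leq j_2$, and for the support to genuinely have size two one needs the strict inequalities $j_1 < d/2 < j_2$; because $j_1,j_2$ are integers these are equivalent to $j_1 \leq j_1^\wedge$ and $j_2 \geq j_2^\vee$ with $(j_1^\wedge, j_2^\vee)$ exactly as defined from the parity of $d$. The boundary values $j_1 = d/2$ or $j_2 = d/2$ (possible only for $d$ even) make one of the two probabilities vanish and merely reproduce the one-point distribution already counted, so no extreme point is lost or double-counted.

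The third step is the converse, namely that each candidate $(j_1,j_2)$ with $j_1 < d/2 < j_2$, and the one-point distribution when $d$ is even, is genuinely an extreme point. For this I would check that the constraints active at the candidate point---the $d-1$ coordinate equalities $p_k = 0$ for $k \notin \{j_1, j_2\}$ together with the two global equalities---are linearly independent, hence have the candidate as their unique common solution, so it cannot be written as a proper convex combination of other elements of $\mathcal{N}_d$; equivalently, these are non-degenerate (respectively, for the one-point distribution, degenerate) basic feasible solutions. A short count then yields $n_d = ((d+1)/2)^2$ for $d$ odd and $n_d = (d/2)^2 + 1$ for $d$ even.

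There is no deep obstacle here: the argument is elementary linear algebra once $\mathcal{N}_d$ is recognised as a polytope. The only point needing care is the parity bookkeeping and the overlap between the support-two and support-one cases, which is resolved by imposing the strict inequalities $j_1 < d/2 < j_2$ on the two-point family and listing the one-point distribution at $d/2$ separately.
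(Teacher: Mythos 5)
Your proof is correct, but it takes a genuinely different route from the paper: the paper's entire proof of this proposition is a citation to Proposition 4.1 of \cite{fontana2021model}, whereas you supply a self-contained argument. Your key observation is that $\mathcal{N}_d$ is the polytope $\{p \geq 0 : Ap = b\}$ with the rank-$2$ constraint matrix having rows $(1,\dots,1)$ and $(0,1,\dots,d)$, so that the basic-feasible-solution characterization of extreme points immediately limits supports to size at most two; since any two distinct columns $(1,j_1)^{\top}$, $(1,j_2)^{\top}$ are linearly independent, the same characterization also gives the converse, which makes your third step logically redundant (though harmless to spell out). The parity bookkeeping is handled correctly: strict positivity of both entries of \eqref{eq:extr-rays-pj} is equivalent to $j_1 < d/2 < j_2$, which for integers translates exactly into the stated ranges via $(j_1^\wedge, j_2^\vee)$, and the degenerate one-point case at $d/2$ appears only for $d$ even. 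What your approach buys is self-containedness and, as a free by-product, the count $n_d = (d+1)^2/4$ for $d$ odd and $d^2/4 + 1$ for $d$ even, which the paper again only cites from \cite{fontana2021model} in Corollary \ref{cor:number-rays}; what it costs is nothing beyond invoking the standard linear-programming fact (available from Section 18 of \cite{rockafellar1970convex}, which the paper already references).
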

\begin{proof}
	See Proposition 4.1 of \cite{fontana2021model}.
\end{proof}

From each extreme point in \eqref{eq:extr-rays-pj}, one can then extract an associated extreme point for the dependence parameters of eFGM copulas, the solutions solve the dual problem of finding the extreme points in the set of inequalities in \eqref{eq:constraints-exch}. In other words, the extreme points of $\mathcal{N}_d$ map to extreme points of $\mathcal{T}_d$. For $d = 2$, the eFGM copula parameters corresponding to the extreme points of $\mathcal{T}_2$ are $-1$ and $1$. 

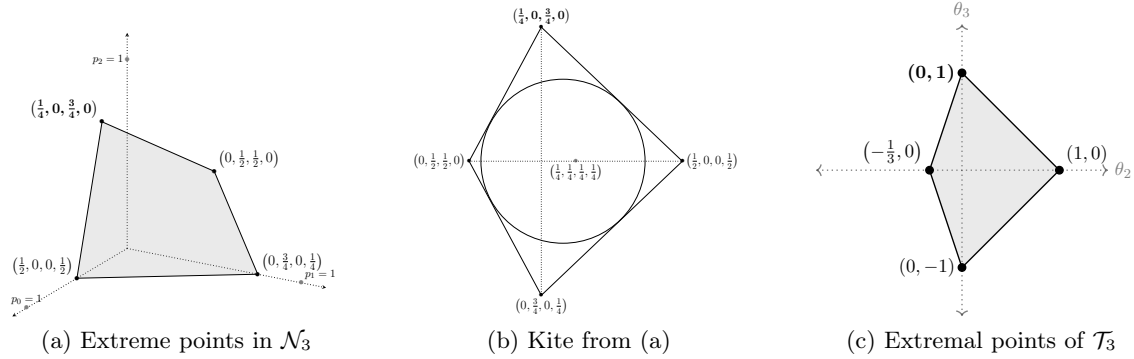
\begin{figure}[ht]
	\centering
	\subfloat[Extreme points in $\mathcal{N}_3$]{{\resizebox{0.27\textwidth}{!}{
	\begin{tikzpicture}[scale=1,tdplot_main_coords]
		
		\newdimen\R
		\R=7pt
		
		\draw[thick,-stealth, dotted] (0,0,0)--(0,0,\R+1); 
		\draw[thick,-stealth, dotted] (0,0,0)--(0,\R+1,0);
		\draw[thick,-stealth, dotted] (0,0,0)--(\R+1,0,0);
		
		
		\coordinate[label=above right:{ $p_1 = 1 $}] (z) at (0,\R,0);
		\coordinate[label=left:{ $p_2 = 1$}] (y) at (0,0,\R);
		\coordinate[label=     { $p_0 = 1$}] (x) at (\R,0,0);
		
		\coordinate[label=above right:{\Large $\left(0, \frac{1}{2}, \frac{1}{2}, 0\right)$}] (p1) at (0,\R/2,\R/2);
		\coordinate[label=above left:{\Large $\boldsymbol{\left(\frac{1}{4}, 0, \frac{3}{4}, 0\right)}$}] (p2) at (\R/4,0,3\R/4);
		\coordinate[label=above left:{\Large $\left(\frac{1}{2}, 0, 0, \frac{1}{2}\right)$}] (p3) at (\R/2,0, 0);
		\coordinate[label=above right:{\Large $\left(0, \frac{3}{4}, 0, \frac{1}{4}\right)$}] (p4) at (0,3\R/4,0);
		
		\node[gray] at (z) {\textbullet};
		\node[gray] at (x) {\textbullet};
		\node[gray] at (y) {\textbullet};
		
		\node at (p1) {\textbullet};
		\node at (p2) {\textbullet};
		\node at (p3) {\textbullet};
		\node at (p4) {\textbullet};

		
		\fill[gray!80,opacity=0.2] (p1) -- (p2) -- (p3) -- (p4)-- cycle; 
		\draw []       (p1)--(p2)--(p3)--(p4)--(p1);
\end{tikzpicture}}}}
	\qquad
	\subfloat[Kite from (a)]{{\resizebox{0.27\textwidth}{!}{
	\begin{tikzpicture}[scale=1,thick]
		
		\newdimen\R
		\R=10cm
		
		\coordinate[label=left:{\Large $\left(0,\frac{1}{2},\frac{1}{2},0\right)$}]  (A) at (-0.2932034\R, 0){};
		\coordinate[label=above:{\Large $\boldsymbol{\left(\frac{1}{4},0,\frac{3}{4},0\right)}$}]  (B) at (0, 0.5448624\R){};
		\coordinate[label=below:{\Large $\left(0,\frac{3}{4},0,\frac{1}{4}\right)$}]  (C) at (0, -0.5448624\R){};
		\coordinate[label=right:{\Large $\left(\frac{1}{2},0, 0,\frac{1}{2}\right)$}]  (D) at (0.572822\R, 0){};
		\coordinate[label=below:{\Large  $\left(\frac{1}{4},\frac{1}{4},\frac{1}{4},\frac{1}{4}\right)$}]  (ind) at (0.1398093\R, 0){};
		\coordinate (mid) at (0.088\R, 0){};
		\coordinate  (E) at (0.318\R, 0.242\R){};
		
		\node at (A) {\textbullet};
		\node at (B) {\textbullet};
		\node at (C) {\textbullet};
		\node at (D) {\textbullet};
		\node[gray] at (ind) {\textbullet};
		
		\draw[] (0.088\R, 0) circle (0.3333\R);
		
		
		\draw []       (A)--(B)--(D)--(C)--(A);
		
		\draw [dotted] (A) -- (D);
		\draw [dotted] (B) -- (C);
		
		
\end{tikzpicture}}}}
	\qquad
	\subfloat[Extremal points of $\mathcal{T}_3$]{{\resizebox{0.27\textwidth}{!}{
\begin{tikzpicture}[scale=2,axis/.style={<->,dotted, opacity=0.5},thick]
	
	\draw[axis] (-1.5, 0) -- (1.5, 0) node [right] {$\theta_2$};
	\draw[axis] (0, -1.5) -- (0, 1.5) node [above] {$\theta_3$};
	
	\coordinate  (d1) at (0, 1){};
	\coordinate  (d2) at (-1/3, 0){};
	\coordinate  (d3) at (1, 0){};
	\coordinate  (d4) at (0, -1){};
	
	\fill[gray!80,opacity=0.2] (d1) -- (d3) -- (d4) -- (d2) -- cycle; 
	\draw []       (d1)--(d3)--(d4)--(d2)--(d1);
	
	
	\draw[fill=black] (d1) circle (0.1em) node[left] {$\boldsymbol{(0, 1)}$};
	\draw[fill=black] (d2) circle (0.1em) node[above left] {$\left(-\frac{1}{3}, 0\right)$};
	\draw[fill=black] (d3) circle (0.1em) node[above right] {$(1, 0)$};
	\draw[fill=black] (d4) circle (0.1em) node[left] {$(0, -1)$};
	
	
\end{tikzpicture}
}}}
	\caption{Convex hull of admissible eFGM copula parameters for three dimensions.}
	\label{fig:hull2}
\end{figure}

Figure \ref{fig:hull2} (a) presents the coordinates $(p_0, p_1, p_2)$ from the extremal points of $\mathcal{N}_3$, the last value is not free since $p_3 = 1 - p_0 - p_1 - p_2$. The convex hull forms a geometric kite on a plane. In Figure \ref{fig:hull2} (b), we present the kite from (a) along with the kite's inscribed circle, which has radius $1/3$. In Figure \ref{fig:hull2} (c) we present the extreme points of $\mathcal{T}_3$, which is also a kite, but not a scaled version of the kite in (b). The coordinates associated to the extreme point $(\theta_2, \theta_3) = (0, 1)$ is presented in bold.

Figure \ref{fig:hull3} (a) presents the convex hull of pmfs $(p_0, \dots, p_4)$ generated by the extreme points of $\mathcal{N}_4$. Each coordinate represents a point $(p_0, p_1, p_2, p_3)$ since $p_4$ is not free; we have $p_4 = 1 - p_1 - p_2 - p_3$. We place the coordinates inside a tesseract defined by the cartesian product $[0, 1]^4$, to represent the coordinates in four dimensions.\footnote{We use the tesseract design by Claude Bragdon, see \cite{rucker2012geometry} for details.}
The convex polytope generated by the extreme points of $\mathcal{N}_4$ corresponds to a pyramid with a kite base, called a kite pyramid, with the most negative dependence case at the apex. Figure \ref{fig:hull3} (b) presents the extremal points of $\mathcal{T}_4$, with the cube $[-1, 1]^3$ in dotted lines for scale. The shape of $\mathcal{T}_4$ is also a kite pyramid. Finally, we present the dependence parameters associated to the extreme points of $\mathcal{N}_{10}$ in Table \ref{tab:extreme10}. Rows 22 and 27 (in bold) represent respectively the extremal positive and negative dependence within eFGM copulas as we will show in \eqref{eq:supermodular-u}. 



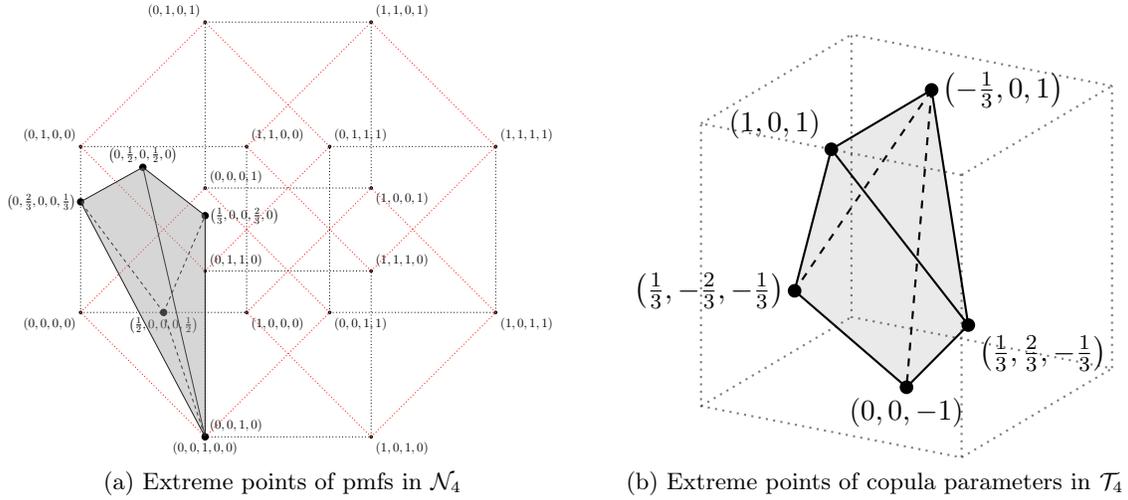
\begin{figure}[ht]
	\centering
	\subfloat[Extreme points of pmfs in $\mathcal{N}_4$]{{
%
%
%
%
%
%
%
%
\resizebox{0.45\textwidth}{!}{
\begin{tikzpicture}[scale=5,axis/.style={<->,dotted, opacity=0.5},thick]
	
	\coordinate[label=above right:{ $\left(0, 0, 1, 0\right)$}]  (d1) at (0, 0){};
	\coordinate[label=below right:{ $\left(1, 0, 1, 0\right)$}]  (d2) at (1, 0){};
	\coordinate[label=above right:{ $\left(0, 1, 1, 0\right)$}]  (d3) at (0, 1){};
	\coordinate[label=above right:{ $\left(1, 1, 1, 0\right)$}]  (d4) at (1, 1){};
	
	\draw [dotted]       (d1)--(d3)--(d4)--(d2)--(d1);
	
	\coordinate[label=below left:{ $\left(0, 0, 0, 0\right)$}]  (d5) at (-0.75, 0.75){};
	\coordinate[label=below right:{ $\left(1, 0, 0, 0\right)$}]  (d6) at (0.25, 0.75){};
	\coordinate[label=above left:{ $\left(0, 1, 0, 0\right)$}]  (d7) at (-0.75, 1.75){};
	\coordinate[label=above right:{ $\left(1, 1, 0, 0\right)$}]  (d8) at (0.25, 1.75){};
	
	\draw [dotted]       (d5)--(d7)--(d8)--(d6)--(d5);
	
	\coordinate[label=below right:{ $\left(0, 0, 1, 1\right)$}]  (d9) at (0.75, 0.75){};
	\coordinate[label=below right:{ $\left(1, 0, 1, 1\right)$}]  (d10) at (1.75, 0.75){};
	\coordinate[label=above right:{ $\left(0, 1, 1, 1\right)$}]  (d11) at (0.75, 1.75){};
	\coordinate[label=above right:{ $\left(1, 1, 1, 1\right)$}]  (d12) at (1.75, 1.75){};

	\draw [dotted]       (d9)--(d11)--(d12)--(d10)--(d9);

	\coordinate[label=above right:{ $\left(0, 0, 0, 1\right)$}]  (d13) at (0, 1.5){};
	\coordinate[label=below right:{ $\left(1, 0, 0, 1\right)$}]  (d14) at (1, 1.5){};
	\coordinate[label=above left:{ $\left(0, 1, 0, 1\right)$}]  (d15) at (0, 2.5){};
	\coordinate[label=above right:{ $\left(1, 1, 0, 1\right)$}]  (d16) at (1, 2.5){};
	
	\draw [dotted]       (d13)--(d15)--(d16)--(d14)--(d13);
	
	\foreach \i in {1,2,...,16} \draw[fill=gray] (d\i) circle (0.02em);

	\draw [color = red, dotted] (d1)--(d5);
	\draw [color = red, dotted] (d2)--(d6);
	\draw [color = red, dotted] (d3)--(d7);
	\draw [color = red, dotted] (d4)--(d8);
	
	\draw [color = red, dotted] (d1)--(d9);
	\draw [color = red, dotted] (d2)--(d10);
	\draw [color = red, dotted] (d3)--(d11);
	\draw [color = red, dotted] (d4)--(d12);
	
	\draw [color = red, dotted] (d5)--(d13);
	\draw [color = red, dotted] (d6)--(d14);
	\draw [color = red, dotted] (d7)--(d15);
	\draw [color = red, dotted] (d8)--(d16);
	
	\draw [color = red, dotted] (d9)--(d13);
	\draw [color = red, dotted] (d10)--(d14);
	\draw [color = red, dotted] (d11)--(d15);
	\draw [color = red, dotted] (d12)--(d16);
	
	\coordinate[label=below:{ $\left(0, 0, 1, 0, 0\right)$}]  (aa) at (0, 0){};
	\coordinate[label=below:{ $\left(\frac{1}{2}, 0, 0, 0, \frac{1}{2}\right)$}]  (bb) at (-0.25, 0.75){};
	\coordinate[label=left:{ $\left(0, \frac{2}{3}, 0, 0, \frac{1}{3}\right)$}]  (ee) at (-0.75, 17/12){};
	\coordinate[label=above:{ $\left(0, \frac{1}{2}, 0, \frac{1}{2}, 0\right)$}]  (dd) at (-0.375, 1.625){};
	\coordinate[label=right:{ $\left(\frac{1}{3}, 0, 0, \frac{2}{3}, 0\right)$}]  (cc) at (0, 4/3){};
	
	\draw[fill=black] (aa) circle (0.05em);
	\draw[fill=black] (bb) circle (0.05em);
	\draw[fill=black] (ee) circle (0.05em);
	\draw[fill=black] (dd) circle (0.05em);
	\draw[fill=black] (cc) circle (0.05em);

	\draw [thick]       (aa) -- (ee);
	\draw [thick]       (ee) -- (dd);
	\draw [thick]       (dd) -- (cc);
	\draw [thick, dashed]       (bb) -- (cc);
	\draw [thick]       (aa) -- (cc);
	\draw [thick]       (aa) -- (dd);
	\draw [thick, dashed]       (bb) -- (aa);
	\draw [thick, dashed]       (bb) -- (ee);
	\fill[gray!80,opacity=0.4] (aa) -- (ee) -- (dd) -- (cc)-- cycle; 
\end{tikzpicture}}}}
	\qquad
	\subfloat[Extreme points of copula parameters in $\mathcal{T}_4$]{{\begin{tikzpicture}[scale=2, tdplot_main_coords,axis/.style={<->,dotted, opacity=0.5},thick]

	
	\coordinate  (d1) at (1/3, 2/3, -1/3){};
	\coordinate  (d2) at (0, 0, -1){};
	\coordinate  (d3) at (1, 0, 1){};
	\coordinate  (d4) at (1/3, -2/3, -1/3){};
	\coordinate  (d5) at (-1/3, 0, 1){};
	\fill[gray!80,opacity=0.2] (d1) -- (d3) -- (d4)-- cycle; 
	\fill[gray!80,opacity=0.2] (d1) -- (d2) -- (d4)-- cycle; 
	\fill[gray!80,opacity=0.2] (d1) -- (d5) -- (d3)-- cycle; 
	\draw []       (d1)--(d3)--(d4)--(d2)--(d1);
	\draw [dashed] (d5)--(d4);
	\draw [dashed] (d5)--(d2);
	\draw []       (d5)--(d3);
	\draw []       (d5)--(d1);
	
	
	\draw[fill=black] (d1) circle (0.1em) node[below right] {$\left(\frac{1}{3}, \frac{2}{3}, -\frac{1}{3}\right)$};
	\draw[fill=black] (d2) circle (0.1em) node[below] {$\left(0, 0, -1\right)$};
	\draw[fill=black] (d3) circle (0.1em) node[above left] {$\left(1, 0, 1\right)$};
	\draw[fill=black] (d4) circle (0.1em) node[left] {$\left(\frac{1}{3}, -\frac{2}{3}, -\frac{1}{3}\right)$};
	\draw[fill=black] (d5) circle (0.1em) node[right] {$\left(-\frac{1}{3}, 0, 1\right)$};
	
	
		
	\coordinate  (e1) at (1, 1, 1){};	
	\coordinate  (e2) at (1, -1, 1){};	
	\coordinate  (e3) at (1, -1, -1){};	
	\coordinate  (e4) at (1, 1, -1){};	
	\coordinate  (e11) at (-1, 1, 1){};	
	\coordinate  (e12) at (-1, -1, 1){};	
	\coordinate  (e13) at (-1, -1, -1){};	
	\coordinate  (e14) at (-1, 1, -1){};	
	
	\draw [dotted, opacity = 0.5]       (e1)--(e2)--(e3)--(e4)--(e1);
	\draw [dotted, opacity = 0.5]       (e11)--(e12)--(e13)--(e14)--(e11);
	
	\draw [dotted, opacity = 0.5]       (e1)--(e11);
	\draw [dotted, opacity = 0.5]       (e2)--(e12);
	\draw [dotted, opacity = 0.5]       (e3)--(e13);
	\draw [dotted, opacity = 0.5]       (e4)--(e14);

\end{tikzpicture}}}
	\caption{Convex hull of admissible eFGM copula parameters for four dimensions.}
	\label{fig:hull3}
\end{figure}

\begin{table}[ht]
	
	\centering
	\begin{tabular}{ccccccccc}
		$\theta_2$   & $\theta_3$ &  $\theta_4$   & $\theta_5$ &   $\theta_6$   & $\theta_7$ &  $\theta_8$  & $\theta_9$ & $\theta_{10}$ \\\hline
		1/9      &    2/9     &     11/63     &    8/63    &     11/63      &    2/9     &     1/9      &     0      &       1       \\
		1/15      &    2/15    &     1/21      &   -4/105   &    -17/525     &   -2/75    &    -13/75    &   -8/25    &      3/5      \\
		1/45      &    1/15    &    -1/105     &   -4/63    &     -1/105     &    1/15    &     1/45     &     0      &       1       \\
		-1/45     &    1/45    &     -1/63     &   -2/63    &      1/35      &    1/15    &    -1/15     &   -4/15    &      1/3      \\
		-1/15     &     0      &     1/105     &     0      &     1/105      &     0      &    -1/15     &     0      &       1       \\
		1/3      &    1/3     &     5/21      &    2/7     &      1/3       &    5/21    &     5/21     &    4/7     &     -3/7      \\
		11/45     &    8/45    &     1/45      &     0      &     -1/45      &   -8/45    &    -11/45    &     0      &      -1       \\
		7/45      &    1/15    &     -1/15     &   -2/45    &      1/75      &   -1/75    &    17/225    &   12/25    &     -1/5      \\
		1/15      &     0      &     -1/15     &     0      &      1/15      &     0      &    -1/15     &     0      &      -1       \\
		-1/45     &   -1/45    &     -1/63     &    2/63    &      1/35      &   -1/15    &    -1/15     &    4/15    &      1/3      \\
		5/9      &    1/3     &      1/3      &    4/9     &      1/3       &    1/3     &     5/9      &     0      &       1       \\
		19/45     &    2/15    &     1/21      &    4/63    &    -13/105     &  -22/105   &   -29/315    &   -24/35   &      1/7      \\
		13/45     &     0      &     -1/15     &     0      &     -1/15      &     0      &    13/45     &     0      &       1       \\
		7/45      &   -1/15    &     -1/15     &    2/45    &      1/75      &    1/75    &    17/225    &   -12/25   &     -1/5      \\
		1/45      &   -1/15    &    -1/105     &    4/63    &     -1/105     &   -1/15    &     1/45     &     0      &       1       \\
		7/9      &    2/9     &      5/9      &    4/9     &      1/3       &    2/3     &     1/9      &    8/9     &     -1/9      \\
		3/5      &     0      &      1/5      &     0      &      -1/5      &     0      &     -3/5     &     0      &      -1       \\
		19/45     &   -2/15    &     1/21      &   -4/63    &    -13/105     &   22/105   &   -29/315    &   24/35    &      1/7      \\
		11/45     &   -8/45    &     1/45      &     0      &     -1/45      &    8/45    &    -11/45    &     0      &      -1       \\
		1/15      &   -2/15    &     1/21      &   4/105    &    -17/525     &    2/75    &    -13/75    &    8/25    &      3/5      \\
		\textbf{1}   & \textbf{0} &  \textbf{1}   & \textbf{0} &   \textbf{1}   & \textbf{0} &  \textbf{1}  & \textbf{0} &  \textbf{1}   \\
		7/9      &    -2/9    &      5/9      &    -4/9    &      1/3       &    -2/3    &     1/9      &    -8/9    &     -1/9      \\
		5/9      &    -1/3    &      1/3      &    -4/9    &      1/3       &    -1/3    &     5/9      &     0      &       1       \\
		1/3      &    -1/3    &     5/21      &    -2/7    &      1/3       &   -5/21    &     5/21     &    -4/7    &     -3/7      \\
		1/9      &    -2/9    &     11/63     &   -8/63    &     11/63      &    -2/9    &     1/9      &     0      &       1       \\
		\textbf{-1/9} & \textbf{0} & \textbf{1/21} & \textbf{0} & \textbf{-1/21} & \textbf{0} & \textbf{1/9} & \textbf{0} &  \textbf{-1}
	\end{tabular}
	\caption{Copula parameters associated to the extreme points of $\mathcal{N}_{10}$.}\label{tab:extreme10}
\end{table}

\begin{corollary}\label{cor:number-rays}
	There are $(d + 1)^2/4$ extreme points if $d$ is odd and $d^2/4 + 1$ if $d$ is even. 
\end{corollary}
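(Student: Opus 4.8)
The plan is to reduce the statement to counting the extreme points of $\mathcal{N}_d$ (which are listed explicitly in the Proposition above) and then transferring that count to $\mathcal{T}_d$ via the correspondence already in place. First I would make precise that the map sending the pmf of $N_d$ — equivalently, the vector $(\zeta_0, \dots, \zeta_d)$ — to the dependence vector $(\theta_2, \dots, \theta_d)$ is an affine bijection between the affine hull of $\mathcal{N}_d$ and $\mathbb{R}^{d-1}$. Indeed, Corollary \ref{cor:madsen} exhibits each $\theta_k$ as a fixed affine function of $(\zeta_0, \dots, \zeta_d)$, and the resulting linear system is triangular (the coefficient of $\zeta_k$ in $\theta_k$ is $(-2)^k \neq 0$), hence invertible; combined with $\zeta_0 = 1$, $\zeta_1 = 1/2$, and the inclusion–exclusion relation recovering the pmf of $N_d$ from $(\zeta_0,\dots,\zeta_d)$, this shows the map is injective, while Theorem \ref{thm:stochastic-representation} shows it is onto $\mathcal{T}_d$. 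Since an affine bijection carries extreme points to extreme points, $\mathcal{T}_d$ and $\mathcal{N}_d$ have the same number of extreme points.

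Then the proof is pure bookkeeping from the Proposition. The extreme points of $\mathcal{N}_d$ are indexed by the pairs $(j_1, j_2)$ with $j_1 \in \{0, \dots, j_1^\wedge\}$ and $j_2 \in \{j_2^\vee, \dots, d\}$, together with (when $d$ is even) the single one-point distribution at $d/2$. If $d$ is odd, $(j_1^\wedge, j_2^\vee) = ((d-1)/2, (d+1)/2)$, so there are $\tfrac{d-1}{2} + 1 = \tfrac{d+1}{2}$ admissible values of $j_1$ and $d - \tfrac{d+1}{2} + 1 = \tfrac{d+1}{2}$ of $j_2$, giving $\left(\tfrac{d+1}{2}\right)^2 = \tfrac{(d+1)^2}{4}$ extreme points. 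If $d$ is even, $(j_1^\wedge, j_2^\vee) = (d/2 - 1, d/2 + 1)$, so there are $\tfrac{d}{2}$ values of $j_1$ and $\tfrac{d}{2}$ of $j_2$, giving $\tfrac{d^2}{4}$ pairs, and adjoining the one-point distribution at $d/2$ yields $\tfrac{d^2}{4} + 1$.

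The computation itself is routine; the only point requiring care is the claim that the correspondence between $\mathcal{N}_d$ and $\mathcal{T}_d$ is a genuine bijection, so that no extreme points are created or lost — but, as noted, this is immediate from the triangular structure of \eqref{eq:exch-binome}. As a consistency check, the formula returns $4$ for $d = 3$ (Figure \ref{fig:hull2}(c)), $5$ for $d = 4$ (Figure \ref{fig:hull3}(b)), and $26$ for $d = 10$ (the number of rows of Table \ref{tab:extreme10}).
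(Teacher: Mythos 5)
Your proposal is correct. It differs from the paper only in that the paper disposes of this corollary by citing Corollary 4.6 of \cite{fontana2021model}, whereas you derive the count yourself from the enumeration of extreme points of $\mathcal{N}_d$ given in the preceding proposition: the bookkeeping over the index pairs $(j_1, j_2)$ is right in both parity cases (and your consistency checks against Figures \ref{fig:hull2}--\ref{fig:hull3} and the $26$ rows of Table \ref{tab:extreme10} all match). The one piece of your argument that goes beyond what either the paper or the cited reference makes explicit is the justification that the passage from $\mathcal{N}_d$ to $\mathcal{T}_d$ neither creates nor destroys extreme points; the paper asserts only that ``the extreme points of $\mathcal{N}_d$ map to extreme points of $\mathcal{T}_d$.'' Your observation that the composite map $\pr(N_d = \cdot) \mapsto (\zeta_0,\dots,\zeta_d) \mapsto (\theta_2,\dots,\theta_d)$ is an affine injection --- the second stage being lower triangular with diagonal entries $(-2)^k \neq 0$ by \eqref{eq:exch-binome}, the first invertible by inclusion--exclusion --- is exactly the right way to close that gap, since an affine bijection onto its image puts extreme points in one-to-one correspondence. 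So your route is more self-contained and slightly more careful than the paper's; what the paper's citation buys is brevity, at the cost of leaving the bijectivity point implicit.
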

\begin{proof}
	See Corollary 4.6 of \cite{fontana2021model}.
\end{proof}

\begin{remark}\label{rem:non-unique}
	The representation as a convex combination of extreme points is not unique. The dependence parameters associated with extreme points of $\mathcal{N}_d$ are linearly independent but there may be different sets of weights $\lambda_1, \dots, \lambda_{n_d}$ which yield the same dependence parameter vector. For example, let $d = 3$, then one recovers the independence parameter vector $(0, 0)$ with $2^{-1} (0, 1) + 2^{-1}(0, -1)$ and $3/4(-1/3, 0) +4^{-1}(1, 0)$.
\end{remark}

\section{Dependence ordering}\label{sec:dependence-ordering}

\subsection{Supermodular order}

We aim to compare vectors of rvs, $(V_{1},\ldots,V_{d})$ and $(V_{1}^{\prime},\ldots,V_{d}^{^{\prime}})$, where, for each $j \in \{1,\ldots,d\}$, $V_{j}$ and $V_{j}'$ have the same marginal distribution. Given this condition on the marginals, we rely on dependence stochastic orders. An important dependence order is the supermodular order, see Sections 3.8 and 3.9 of \cite{muller2002comparison} for details.
\begin{definition}[Supermodular order] \label{defSupermodularOrder}
	We say $(V_1, \dots, V_d)$ is smaller than $(V_1', \dots, V_d')$ under the supermodular order,	denoted $(V_1, \dots, V_d)\preceq _{sm}(V_1', \dots, V_d')$, if $E\left\{\phi (V_1, \dots, V_d)\right\} \leq E\left\{ \phi (V_1', \dots, V_d')\right\} $ for all supermodular functions $\phi $, given that the
	expectations exist. A function $\phi :\mathbb{R}^{d}\rightarrow \mathbb{R}$ is said to be supermodular if
	\begin{eqnarray*}
		&&\phi (x_{1},\ldots,x_{i}+\varepsilon ,\ldots,x_{j}+\delta ,\ldots,x_{d})-\phi
		(x_{1},\ldots,x_{i}+\varepsilon ,\ldots,x_{j},\ldots,x_{d}) \\
		&\geq &\phi (x_{1},\ldots,x_{i},\ldots,x_{j}+\delta ,\ldots,x_{d})-\phi
		(x_{1},\ldots,x_{i},\ldots,x_{j},\ldots,x_{d})
	\end{eqnarray*}
	holds for all $(x_1, \dots, x_d)\in \mathbb{R}^{d}$, $1\leq
	i\leq j\leq d$\ and all $\varepsilon$, $\delta >0$.
\end{definition}
The supermodular order satisfies the nine desired properties for dependence orders as mentioned in Section 3.8 of \cite{muller2002comparison}. Ordering random vectors according to the supermodular order is desirable since it implies stochastic ordering results for the sum of vectors of rvs. See also \cite{shaked2007stochastic} and \cite{denuit2006actuarial} for more details on the supermodular order. 

\subsection{Supermodular ordering within eFGM copulas}\label{ss:supermodular-efgm}

The following theorem from \cite{blier-wong2022stochasticb} presents the general result for supermodular orders within FGM copulas with the stochastic representation.
\begin{theorem}\label{thm:supermodular-i-u}
	If $(I_1, \dots, I_d) \preceq_{sm} (I_1', \dots, I_d')$, then $(U_1, \dots, U_d) \preceq_{sm} (U_1', \dots, U_d')$.
\end{theorem}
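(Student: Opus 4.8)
The plan is to exploit the mixture structure furnished by Theorem~\ref{thm:stochastic-representation} together with a coupling that makes each $U_m$ a monotone function of $I_m$. First I would observe that, by \eqref{eq:stochastic-formulation}, conditionally on $(I_1,\dots,I_d)=(i_1,\dots,i_d)$ the components $U_1,\dots,U_d$ are independent, with $U_m$ having distribution function $u\mapsto u\bigl(1+(-1)^{i_m}\overline u\bigr)$; this equals $u^2$ when $i_m=1$ and $1-(1-u)^2$ when $i_m=0$, i.e.\ the law of $\max(X,Y)$, respectively $\min(X,Y)$, for two independent $\mathrm{Unif}(0,1)$ variables $X,Y$. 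So I would realise everything on a single probability space: take i.i.d.\ pairs $(X_1,Y_1),\dots,(X_d,Y_d)$ of independent standard uniforms, independent of both $(I_1,\dots,I_d)$ and $(I_1',\dots,I_d')$, and put $U_m=h_{I_m}(X_m,Y_m)$ and $U_m'=h_{I_m'}(X_m,Y_m)$ with $h_1=\max$ and $h_0=\min$. One checks from \eqref{eq:stochastic-formulation} that $(U_1,\dots,U_d)$ and $(U_1',\dots,U_d')$ then have exactly the claimed FGM copulas, and the decisive feature is the pointwise inequality $h_0(x,y)\le h_1(x,y)$: flipping a coordinate of the Bernoulli vector from $0$ to $1$ raises that $U_m$ while leaving the others untouched.

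Next I would fix a supermodular $\phi\colon\mathbb{R}^d\to\mathbb{R}$ (assuming, as is standard for the supermodular order, that $\phi$ is bounded so that all expectations are finite) and define $g\colon\{0,1\}^d\to\mathbb{R}$ by $g(i_1,\dots,i_d)=E\bigl[\phi\bigl(h_{i_1}(X_1,Y_1),\dots,h_{i_d}(X_d,Y_d)\bigr)\bigr]$. Conditioning on the Bernoulli vectors and using the independence of the $(X_m,Y_m)$ gives $E\{\phi(U_1,\dots,U_d)\}=E\{g(I_1,\dots,I_d)\}$ and $E\{\phi(U_1',\dots,U_d')\}=E\{g(I_1',\dots,I_d')\}$. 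Since $(I_1,\dots,I_d)\preceq_{sm}(I_1',\dots,I_d')$ by hypothesis and these vectors are supported on $\{0,1\}^d$, the proof reduces to showing that $g$ is supermodular on the lattice $\{0,1\}^d$: a lattice-supermodular function on $\{0,1\}^d$ extends to a supermodular function on $\mathbb{R}^d$ (standard, e.g.\ via its multilinear extension), so the hypothesis then yields $E\{g(I)\}\le E\{g(I')\}$, i.e.\ $E\{\phi(U)\}\le E\{\phi(U')\}$, and as $\phi$ ranges over all supermodular functions this is precisely $(U_1,\dots,U_d)\preceq_{sm}(U_1',\dots,U_d')$.

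To establish that $g$ is lattice-supermodular I would verify, for each pair $1\le k<l\le d$ and each choice of $i_j\in\{0,1\}$ for $j\ne k,l$, that the mixed second difference of $g$ in coordinates $k,l$ is nonnegative. Conditioning on the pairs $(X_j,Y_j)$ with $j\ne k,l$ and writing $w_j=h_{i_j}(X_j,Y_j)$, the map $\tilde\phi(s,t)$ obtained from $\phi$ by placing $s$ in slot $k$, $t$ in slot $l$ and $w_j$ in the remaining slots is supermodular on $\mathbb{R}^2$, hence $2$-increasing. With $m_k=\min(X_k,Y_k)$, $M_k=\max(X_k,Y_k)$ and similarly $m_l,M_l$, the mixed second difference of $g$ equals
\[
E\bigl[\tilde\phi(M_k,M_l)-\tilde\phi(M_k,m_l)-\tilde\phi(m_k,M_l)+\tilde\phi(m_k,m_l)\bigr],
\]
and since $m_k\le M_k$ and $m_l\le M_l$ surely, the bracketed expression is the increment of the $2$-increasing function $\tilde\phi$ over the genuine rectangle $[m_k,M_k]\times[m_l,M_l]$, hence nonnegative pointwise; taking expectations gives the claim.

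The one genuinely load-bearing idea is the coupling in the first step --- representing $U_m$ and $U_m'$ through $\min$ and $\max$ of the \emph{same} pair of uniforms selected by the Bernoulli coordinate --- which converts ``$g$ supermodular'' into a pointwise rectangle inequality for $2$-increasing functions. I expect the remaining points to be routine: the reduction to bounded test functions (so that every expectation is finite), the one-line verification that the coupled vectors have the right copulas, and the remark that lattice-supermodularity on $\{0,1\}^d$ suffices to bring the hypothesis $(I_1,\dots,I_d)\preceq_{sm}(I_1',\dots,I_d')$ to bear.
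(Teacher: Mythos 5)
Your argument is correct, but be aware that the paper does not actually prove Theorem \ref{thm:supermodular-i-u}: it is imported without proof from \cite{blier-wong2022stochasticb}, so there is no internal argument to compare against, and what you have written is a genuine, self-contained proof rather than a reconstruction. The load-bearing idea is the right one: reading \eqref{eq:stochastic-formulation} as a mixture in which $U_m$ is conditionally distributed as $\max(X_m,Y_m)$ when $i_m=1$ and $\min(X_m,Y_m)$ when $i_m=0$, coupling \emph{both} copulas through the same uniform pairs, and thereby reducing the claim to lattice supermodularity of $g(i_1,\dots,i_d)=E\bigl[\phi\bigl(h_{i_1}(X_1,Y_1),\dots,h_{i_d}(X_d,Y_d)\bigr)\bigr]$, which you verify pointwise via the rectangle inequality for the $2$-increasing section $\tilde\phi$ over $[m_k,M_k]\times[m_l,M_l]$. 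The only step that deserves one more line is the extension claim: the multilinear extension of a lattice-supermodular $g$ has mixed second differences that are nonnegative combinations of those of $g$ only when the weights $\prod_{j\neq k,l}x_j$ or $(1-x_j)$ are nonnegative, so it is supermodular on $[0,1]^d$ but not automatically on all of $\mathbb{R}^d$ as Definition \ref{defSupermodularOrder} requires; either compose the extension with the coordinatewise clamp $x\mapsto\min(\max(x,0),1)$ (supermodularity is preserved under nondecreasing coordinatewise transformations), or invoke the standard fact that for random vectors supported on the sublattice $\{0,1\}^d$ the supermodular order is characterized by test functions supermodular on that lattice. With that remark in place the proof is complete, and it has the virtue of making explicit the monotone coupling that the present paper leaves hidden inside the citation.
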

\begin{definition}[Convex order]
	Let $X$ and $X'$ be random variables with finite means. We say that $X$ is smaller than $X'$ under the convex order, noted $X\preceq_{cx}X'$, if $E\{\varphi(X)\} \leq E\{\varphi(X')\}$ for all real convex functions $\varphi$ such that the expectations exist.
\end{definition}
\begin{proposition}\label{prop:order-mixture}
	Consider two random vectors $(I_1, \dots, I_d)$ and $(I_1', \dots, I_d')$ with pmfs constructed with finite mixtures as in \eqref{eq:bernoulli-mixture} with respective mixing distributions $\Lambda$ and $\Lambda'$. If $\Lambda \preceq_{cx} \Lambda'$, we have $(I_1, \dots, I_d) \preceq_{sm} (I_1', \dots, I_d')$ and the random vectors constructed with the stochastic representation FGM copulas are ordered with $(U_1, \dots, U_d) \preceq_{sm} (U_1', \dots, U_d')$.
\end{proposition}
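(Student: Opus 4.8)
The plan is to reduce the statement to a one-dimensional comparison in the convex order. The second assertion is immediate from Theorem~\ref{thm:supermodular-i-u}: once we know $(I_1,\dots,I_d)\preceq_{sm}(I_1',\dots,I_d')$, that theorem transfers the ordering to $(U_1,\dots,U_d)\preceq_{sm}(U_1',\dots,U_d')$. So the whole content is to establish the supermodular ordering of the two Bernoulli vectors.

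Fix a supermodular $\phi\colon\mathbb{R}^d\to\mathbb{R}$; since the $I_j$ take values in $\{0,1\}$, only the restriction of $\phi$ to $\{0,1\}^d$ enters $E\{\phi(I_1,\dots,I_d)\}$, and there $\phi$ is supermodular on the lattice $\{0,1\}^d$ (take $\varepsilon=\delta=1$ in the definition). Conditioning on the mixing variable in \eqref{eq:bernoulli-mixture}, the $I_j$ are conditionally i.i.d.\ Bernoulli$(\Lambda)$, so
\[
	E\{\phi(I_1,\dots,I_d)\}=E\{g(\Lambda)\},\qquad
	g(\lambda):=\sum_{(i_1,\dots,i_d)\in\{0,1\}^d}\phi(i_1,\dots,i_d)\prod_{j=1}^d\lambda^{i_j}(1-\lambda)^{1-i_j},
\]
and likewise $E\{\phi(I_1',\dots,I_d')\}=E\{g(\Lambda')\}$ with the \emph{same} function $g$. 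If $g$ is convex on $[0,1]$, then $\Lambda\preceq_{cx}\Lambda'$ gives $E\{g(\Lambda)\}\le E\{g(\Lambda')\}$, i.e.\ $E\{\phi(I_1,\dots,I_d)\}\le E\{\phi(I_1',\dots,I_d')\}$; since $\phi$ was an arbitrary supermodular function and all expectations are finite ($\phi$ being evaluated only at the finitely many points of $\{0,1\}^d$, and $g$ being a polynomial on $[0,1]$), this is exactly $(I_1,\dots,I_d)\preceq_{sm}(I_1',\dots,I_d')$.

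It remains to prove the convexity of $g$. Let $\widetilde\phi\colon[0,1]^d\to\mathbb{R}$ be the multilinear extension of $\phi|_{\{0,1\}^d}$, the unique polynomial affine in each coordinate with $\widetilde\phi=\phi$ on $\{0,1\}^d$, namely $\widetilde\phi(x)=\sum_{A\subseteq\{1,\dots,d\}}c_A\prod_{j\in A}x_j$ with $c_A=\sum_{B\subseteq A}(-1)^{|A|-|B|}\phi(\mathbf{1}_B)$. Because the $I_j$ are conditionally independent with mean $\lambda$ and $\widetilde\phi$ is multilinear, $g(\lambda)=\widetilde\phi(\lambda,\dots,\lambda)$, so by the chain rule $g''(\lambda)=\sum_{i,j=1}^d(\partial_{x_i}\partial_{x_j}\widetilde\phi)(\lambda,\dots,\lambda)$; the pure second derivatives vanish since $\widetilde\phi$ is affine in each variable, leaving $g''(\lambda)=\sum_{i\neq j}(\partial_{x_i}\partial_{x_j}\widetilde\phi)(\lambda,\dots,\lambda)$. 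For $i\neq j$, $\partial_{x_i}\partial_{x_j}\widetilde\phi$ does not depend on $x_i,x_j$ and is multilinear in the remaining $d-2$ coordinates; evaluating it coordinate-by-coordinate identifies it as the multilinear extension (in those $d-2$ coordinates) of the discrete mixed second difference of $\phi$ in positions $i,j$ with the other coordinates fixed at $z\in\{0,1\}^{d-2}$, which is nonnegative for every such $z$ precisely because $\phi$ is supermodular. Hence $\partial_{x_i}\partial_{x_j}\widetilde\phi\ge 0$ on $[0,1]^d$, so $g''\ge0$ on $[0,1]$ and $g$ is convex.

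The one real obstacle is this convexity lemma: one must recognize that supermodularity of $\phi$ on $\{0,1\}^d$ is equivalent, through the multilinear extension, to nonnegativity of all the \emph{mixed} second partials of $\widetilde\phi$ on the cube, and that restricting $\widetilde\phi$ to the diagonal $x_1=\dots=x_d=\lambda$ produces exactly the sum of those mixed terms. The remaining ingredients---the conditioning step, the single use of the convex order, and the appeal to Theorem~\ref{thm:supermodular-i-u}---are routine. A more probabilistic rephrasing of the lemma is that the product of Bernoulli laws at a success probability that is a mixture of two values is dominated in the supermodular order by the corresponding two-point mixture of product Bernoulli laws; this is the intuitive reason the mixing injects positive dependence, and verifying it requires essentially the same computation.
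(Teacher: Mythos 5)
Your proof is correct, but it takes a genuinely different route from the paper's. The paper disposes of the key step---that $\Lambda \preceq_{cx} \Lambda'$ implies $(I_1,\dots,I_d) \preceq_{sm} (I_1',\dots,I_d')$ for conditionally i.i.d.\ Bernoulli mixtures---by citing Proposition 4.1.iii of Denuit, Dhaene and Ribas (2008) and Theorem 2.11 of Cousin and Laurent (2008), and then, exactly as you do, invokes Theorem \ref{thm:supermodular-i-u} for the transfer to $(U_1,\dots,U_d)$. You instead prove the cited step from scratch: conditioning on $\Lambda$ writes $E\{\phi(I_1,\dots,I_d)\}$ as $E\{g(\Lambda)\}$ with $g$ the multilinear extension $\widetilde\phi$ restricted to the diagonal, and the observation that $g'' = \sum_{i\neq j}\partial_{x_i}\partial_{x_j}\widetilde\phi \ge 0$ (the mixed partials being multilinear extensions of the nonnegative discrete mixed differences of a supermodular $\phi$) reduces the supermodular comparison to a single application of the convex order. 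The argument is sound, including the reduction of supermodularity on $\mathbb{R}^d$ to lattice supermodularity on $\{0,1\}^d$ and the finiteness of all expectations. What your approach buys is a self-contained, elementary proof of the mixture-to-supermodular implication in the special case needed here, essentially reproving the relevant part of the cited results; what the paper's approach buys is brevity and a pointer to the general theory, where the same comparison holds for broader classes of factor models. One cosmetic remark: to apply the convex order as literally defined (over convex functions on $\mathbb{R}$), extend $g$ affinely outside $[0,1]$ using its one-sided derivatives at the endpoints; since $\Lambda$ and $\Lambda'$ live on $[0,1]$ this changes nothing.
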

\begin{proof}
	If $\Lambda \preceq_{cx} \Lambda'$, Proposition 4.1.iii of \cite{denuit2008comparison} or Theorem 2.11 of \cite{cousin2008comparison} implies the first result that $(I_1, \dots, I_d) \preceq_{sm} (I_1', \dots, I_d')$. Then, the second result follows from Theorem \ref{thm:supermodular-i-u}.
\end{proof}
\begin{example}
	Let $Y \sim Gamma(1/\alpha, 1/\alpha)$ and $Y' \sim Gamma(1/\alpha', 1/\alpha')$, with $0\leq \alpha \leq \alpha'$. Let also $\Lambda = \exp\{-Y k\}$ and $\Lambda' = \exp\{-Y' k'\}$, where $k = \mathcal{L}_{Y}^{-1}(0.5)$ and $k' = \mathcal{L}_{Y'}^{-1}(0.5)$. One may show that $\Lambda \preceq_{cx} \Lambda'$. Constructing $(I_1, \dots, I_d)$ and $(I_1', \dots, I_d')$ with the representation in \eqref{eq:bernoulli-mixture} and respective mixing rvs $\Lambda$ and $\Lambda'$, we have $(I_1, \dots, I_d) \preceq_{sm} (I_1', \dots, I_d')$ and $(U_1, \dots, U_d) \preceq_{sm} (U_1', \dots, U_d')$.
\end{example}
\begin{example}
	Let $\Lambda \sim Beta(\alpha, \alpha)$ and $\Lambda' \sim Beta(\alpha', \alpha')$, then one may show that $\Lambda \preceq_{cx}\Lambda'$ when $0 \leq \alpha' \leq \alpha$. When using the representation in \eqref{eq:bernoulli-mixture} with a Beta rv, decreasing $\alpha$ increases dependence. 
\end{example}

In the remainder of this section, we aim to identify the lower and upper bounds under the supermodular order for eFGM copulas.
\begin{theorem}
	Let $(I_1^-, \dots, I_d^-)$ have pmf
	\begin{equation}\label{eq:end-fgm}
		\pr(I_1^- = i_1, \dots, I_d^- = i_d) = 
		\begin{cases}
			(r + 1 - d/2) \binom{d}{r}^{-1}, & i_\bullet = r\\
			(d/2 - r) \binom{d}{r + 1}^{-1}, & i_\bullet = r + 1\\
			0, & \text{otherwise}
		\end{cases},
	\end{equation}
	where $r \leq d/2 \leq r + 1$. Also define $(I_1^+, \dots, I_d^+)$ as the vector of rvs with pmf
	\begin{equation}\label{eq:i-como}
		\pr(I_1^+ = i_1, \dots, I_d^+ = i_d) = \begin{cases}
			1/2, & i_\bullet \in \{0, d\}\\
			0, & \text{otherwise}
		\end{cases}.
	\end{equation}
	For all vectors of exchangeable symmetric Bernoulli rvs $(I_1, \dots, I_d)$, we have
	$$(I_1^-, \dots, I_d^-) \preceq_{sm} (I_1, \dots, I_d) \preceq_{sm} (I_1^+, \dots, I_d^+).$$
\end{theorem}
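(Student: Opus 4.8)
The plan is to handle the two inequalities separately and, in each case, to collapse the $d$-dimensional supermodular comparison down to a one-dimensional convex comparison of the sums $N_d=\sum_{j=1}^d I_j$. First I would check that \eqref{eq:end-fgm} and \eqref{eq:i-como} genuinely define exchangeable symmetric Bernoulli vectors: each pmf depends on $(i_1,\dots,i_d)$ only through $i_\bullet$, is nonnegative because $r\le d/2\le r+1$, and gives $E(N_d)=d/2$ after a one-line computation, so both lie in the class being compared (in fact they are the extreme points of $\mathcal{N}_d$ closest to, and furthest from, the mean $d/2$). Write $N_d^-$ for the sum of $(I_1^-,\dots,I_d^-)$, a two-point law on $\{r,r+1\}$ with mean $d/2$ (a point mass at $d/2$ when $d$ is even), and $N_d^+$ for the sum of $(I_1^+,\dots,I_d^+)$, the law putting mass $1/2$ on each of $0$ and $d$.

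The engine of the proof is a transfer lemma: for any two exchangeable symmetric Bernoulli vectors, $N_d\preceq_{cx}N_d'$ implies $(I_1,\dots,I_d)\preceq_{sm}(I_1',\dots,I_d')$. Its proof starts from the observation that exchangeability forces the conditional law of $(I_1,\dots,I_d)$ given $N_d=k$ to be uniform over the $\binom{d}{k}$ binary vectors with exactly $k$ ones, irrespective of which vector in the class we started from. Hence $E\{\phi(I_1,\dots,I_d)\}=E\{\bar\phi(N_d)\}$, where $\bar\phi(k):=\binom{d}{k}^{-1}\sum_{|\mathbf{x}|=k}\phi(\mathbf{x})$, and it suffices to show that $\bar\phi$ is convex on $\{0,\dots,d\}$ whenever $\phi$ is supermodular. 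This is the crux and follows by an averaging argument: summing the supermodular inequality $\phi(\mathbf{1}_T)+\phi(\mathbf{1}_{(T\setminus\{i\})\cup\{j\}})\le\phi(\mathbf{1}_{T\cup\{j\}})+\phi(\mathbf{1}_{T\setminus\{i\}})$ over all triples $(T,i,j)$ with $|T|=k$, $i\in T$ and $j\notin T$, then counting the multiplicity with which each term $\phi(\mathbf{1}_{\cdot})$ of size $k-1$, $k$ and $k+1$ occurs on each side, yields exactly $2\bar\phi(k)\le\bar\phi(k-1)+\bar\phi(k+1)$ for $1\le k\le d-1$. Passing to the piecewise-linear interpolant of $\bar\phi$, which is then convex, we get $E\{\bar\phi(N_d)\}\le E\{\bar\phi(N_d')\}$ whenever $N_d\preceq_{cx}N_d'$, which is the lemma.

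It remains to establish the two convex orderings $N_d^-\preceq_{cx}N_d\preceq_{cx}N_d^+$ for the sum $N_d$ of every exchangeable symmetric Bernoulli vector, i.e. for every law on $\{0,\dots,d\}$ with mean $d/2$. For the upper bound, convexity of $h$ gives $h(x)\le\frac{d-x}{d}h(0)+\frac{x}{d}h(d)$ for $x\in[0,d]$; taking expectations under $N_d$ and using $E(N_d)=d/2$ gives $E\{h(N_d)\}\le\tfrac12 h(0)+\tfrac12 h(d)=E\{h(N_d^+)\}$, so $N_d\preceq_{cx}N_d^+$. For the lower bound, when $d$ is even this is Jensen's inequality ($N_d^-$ is the point mass at $E(N_d)$); when $d$ is odd I would use the stop-loss characterization of $\preceq_{cx}$, noting that since all laws are integer-valued it is enough to compare $t\mapsto E\{(\,\cdot-t)_+\}$ at integers: for $t\le r$ one has $E\{(N_d^--t)_+\}=d/2-t\le E\{(N_d-t)_+\}$ by Jensen (as $E(N_d)=d/2$), and for $t\ge r+1$ one has $E\{(N_d^--t)_+\}=0$, while linearity between consecutive integers handles the intermediate $t$. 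Feeding these two orderings into the transfer lemma yields $(I_1^-,\dots,I_d^-)\preceq_{sm}(I_1,\dots,I_d)\preceq_{sm}(I_1^+,\dots,I_d^+)$. (The upper bound is also immediate from the Fr\'echet--Hoeffding bound for the supermodular order, since $(I_1^+,\dots,I_d^+)$ is precisely the comonotone coupling of $d$ copies of a $\mathrm{Bernoulli}(1/2)$ marginal.)

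I expect the one genuinely nontrivial point to be the convexity of the slice-average $\bar\phi$ in the transfer lemma; the two convex-order verifications and the reduction via conditioning on $N_d$ are routine. Combined afterward with Theorem \ref{thm:supermodular-i-u}, the result will also deliver the corresponding extremal eFGM copulas.
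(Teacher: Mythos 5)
Your proof is correct, but it takes a genuinely different route from the paper, whose proof of this theorem is essentially a citation: the END pmf and its extremality are taken from equation (7.22) of Joe (1997), the supermodular minimality from Theorem 7 of Frostig (2001), and the upper bound from the observation that \eqref{eq:i-como} is the comonotonic (Fr\'echet--Hoeffding) coupling of symmetric Bernoulli marginals. You instead give a self-contained argument whose engine is the transfer lemma: for exchangeable Bernoulli vectors, conditioning on $N_d=k$ gives the uniform law on the slice $\{\mathbf{x}\in\{0,1\}^d: x_\bullet=k\}$, so $E\{\phi(I_1,\dots,I_d)\}=E\{\bar\phi(N_d)\}$ with $\bar\phi(k)=\binom{d}{k}^{-1}\sum_{x_\bullet=k}\phi(\mathbf{x})$, and the supermodular comparison collapses to a convex comparison of the sums. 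I checked your multiplicity count in the averaging step: summing the supermodular inequality over all triples $(T,i,j)$ with $|T|=k$, $i\in T$, $j\notin T$, each size-$k$ set occurs $k(d-k)$ times on the left, and the right-hand totals $k(k+1)\binom{d}{k+1}$ and $(d-k)(d-k+1)\binom{d}{k-1}$ both equal $k(d-k)\binom{d}{k}$, so the common factor cancels and $2\bar\phi(k)\le\bar\phi(k-1)+\bar\phi(k+1)$ follows exactly as you claim; the two convex-order verifications for $N_d^-\preceq_{cx}N_d\preceq_{cx}N_d^+$ are also sound (Jensen plus the chord bound $h(x)\le\frac{d-x}{d}h(0)+\frac{x}{d}h(d)$, and the stop-loss check at integers for $d$ odd). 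What your approach buys is transparency and self-containment: the transfer lemma is in effect a direct combinatorial proof of the special case of Frostig's theorem that the paper invokes as a black box, and it unifies the two bounds under one mechanism (it also subsumes the spirit of Proposition \ref{prop:order-mixture}, which orders mixtures via $\preceq_{cx}$ of the mixing variable). What the paper's route buys is brevity and an explicit link to the literature on extremal exchangeable Bernoulli dependence. If you wanted to fold your argument into the paper, the slice-average convexity lemma would merit being stated separately, as it is the only nontrivial ingredient.
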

\begin{proof}
	The relationship in \eqref{eq:end-fgm} follows from (7.22) of \cite{joe1997multivariate} with $\pi = 1/2$, who identifies this pmf as the most negative dependence for exchangeable Bernoulli rvs. Theorem 7 of \cite{frostig2001comparison} shows that \eqref{eq:end-fgm} is also the lower bound under the supermodular order. The joint pmf in \eqref{eq:i-como} corresponds to the joint pmf when the exchangeable and symmetric Bernoulli random variables are comonotonic, which also coincides with the pmf derived from the Fréchet-Hoeffding upper bound with symmetric Bernoulli marginals. 	
\end{proof}

We note respectively the extreme negative dependence (END) and extreme positive dependence (EPD) eFGM copulas such that the following holds:
\begin{equation}\label{eq:supermodular-u}
	(U_1^{END}, \dots, U_d^{END}) \preceq_{sm} (U_1, \dots, U_d) \preceq_{sm} (U_1^{EPD}, \dots, U_d^{EPD}),
\end{equation}
for all random vectors $(U_1, \dots, U_d)$ with cdf $F_{U_1, \dots, U_d} = C$ in the eFGM family of copulas. The EPD copula, provided in Theorem 5 of \cite{blier-wong2022stochasticb}, is recalled in the following theorem.
\begin{theorem}\label{thm:epd}
	The FGM copula associated to the vector of comonotonic rvs $(I_1^+, \dots, I_d^+)$ is the EPD copula $C^{EPD}$. The dependence parameters are $\theta_k = (1 + (-1)^k)/2$, or $\theta_k = 1$, when $k$ is even, and $\theta_k = 0$, when $k$ is odd.
	 	The expression of the EPD copula $C^{EPD}$ is given by 
	 	\begin{equation} \label{eq:epd}
		 		C^{EPD}\left(u_1, \dots, u_d\right) = \prod_{j = 1}^{d} u_j \left(1 + \sum_{k = 1}^{\left\lfloor \frac{d}{2} \right\rfloor}\sum_{1\leq j_{1}<\cdots <j_{2k}\leq d} \overline{u}_{j_1}\cdots \overline{u}_{j_{2k}}\right), \quad \left(u_1, \dots, u_d\right) \in [0,1]^{d},
		 	\end{equation}
	 	where $\lfloor y \rfloor$ is the floor function returning the greatest integer smaller or equal to $y$. 
\end{theorem}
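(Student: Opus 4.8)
The plan is to apply Theorem~\ref{thm:stochastic-representation} directly: read off the dependence parameters $\theta_k$ from the pmf \eqref{eq:i-como} via \eqref{eq:theta_from_I}, and then substitute the resulting values into the eFGM copula \eqref{eq:eFGM-copula} and collect terms.

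First I would note that under \eqref{eq:i-como} the components $I_1^+,\dots,I_d^+$ are almost surely all equal, taking the common value $0$ with probability $1/2$ and $1$ with probability $1/2$; equivalently, this is the finite mixture construction \eqref{eq:bernoulli-mixture} with $\Lambda$ a Bernoulli$(1/2)$ rv, so \eqref{eq:exch-esp-theta} also applies. Either way, $\prod_{j=1}^k\!\left(I_j^+ - \tfrac12\right) = \left(I_1^+ - \tfrac12\right)^k$ equals $\left(-\tfrac12\right)^k$ or $\left(\tfrac12\right)^k$, each with probability $1/2$, so $E\!\left\{\prod_{j=1}^k\!\left(I_j^+ - \tfrac12\right)\right\} = \tfrac12\left(\tfrac12\right)^k\!\left(1 + (-1)^k\right)$. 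Hence $\theta_k = (-2)^k\cdot\tfrac12\left(\tfrac12\right)^k\!\left(1+(-1)^k\right) = \tfrac12\left(1+(-1)^k\right)$, since $(-2)^k\left(\tfrac12\right)^k = (-1)^k$ and $(-1)^k(-1)^k = 1$. This yields $\theta_k = 1$ for $k$ even and $\theta_k = 0$ for $k$ odd, as claimed.

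Next I would substitute these values into \eqref{eq:eFGM-copula}. Because $\theta_k = 0$ for odd $k$, only the even indices $k = 2l$ with $l \in \{1,\dots,\lfloor d/2 \rfloor\}$ survive, each with coefficient $\theta_{2l} = 1$, so the inner double sum collapses to $\sum_{l=1}^{\lfloor d/2\rfloor}\sum_{1\le j_1<\dots<j_{2l}\le d}\overline{u}_{j_1}\cdots\overline{u}_{j_{2l}}$, which is exactly the expression \eqref{eq:epd}. Finally, since the pmf \eqref{eq:i-como} is precisely the law of the comonotone coupling of symmetric Bernoulli marginals (as already noted in the proof of the preceding theorem), the copula just obtained is indeed the FGM copula associated to $(I_1^+,\dots,I_d^+)$, completing the identification with $C^{EPD}$.

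There is no serious obstacle here: the statement reduces to a one-line moment computation followed by a bookkeeping step in the copula formula. The only points requiring mild care are the sign/power arithmetic $(-2)^k\left(\tfrac12\right)^k = (-1)^k$ and the vanishing of the odd-order parameters, which is precisely what produces the clean floor-function form of \eqref{eq:epd}.
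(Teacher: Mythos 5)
Your proof is correct. Note that the paper itself does not prove Theorem \ref{thm:epd} at all: it merely recalls the result from Theorem 5 of \cite{blier-wong2022stochasticb}, so your argument is a self-contained derivation where the paper offers only a citation. Your computation is exactly the natural one given the machinery already in the paper: under \eqref{eq:i-como} the components are almost surely equal, so $\prod_{j=1}^k (I_j^+ - \tfrac12) = (I_1^+-\tfrac12)^k$ takes the values $(\pm\tfrac12)^k$ with probability $\tfrac12$ each, and \eqref{eq:theta_from_I} gives $\theta_k = (-1)^k\cdot\tfrac12\bigl(1+(-1)^k\bigr) = \tfrac12\bigl(1+(-1)^k\bigr)$; the observation that this equals the mixture formula \eqref{eq:exch-esp-theta} with $\Lambda\sim\mathrm{Bernoulli}(1/2)$ is a nice consistency check and matches the $\alpha\downarrow 0$ limit of the Beta example. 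The substitution into \eqref{eq:eFGM-copula}, with odd-order terms vanishing and even-order coefficients equal to $1$, yields \eqref{eq:epd} exactly. The only caveat worth flagging is that the label ``EPD'' also encodes the supermodular-maximality claim in \eqref{eq:supermodular-u}; that part is not established by your computation, but it is handled by the preceding theorem on $(I_1^+,\dots,I_d^+)$ together with Theorem \ref{thm:supermodular-i-u}, so nothing is missing from what Theorem \ref{thm:epd} itself asserts.
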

One obtains the extreme positive dependence point for the case $j_1 = 0$ and $j_2 = d$ in \eqref{eq:extr-rays-pj}. 

The lower bound for multivariate symmetric Bernoulli rvs under the supermodular order is defined in the following theorem; the proof is provided in Appendix \ref{app:lower-bound}. 
\begin{theorem}\label{thm:lower-bound-i}
	The copula constructed with the vector of rvs $\left(I_1^-, \dots, I_d^-\right)$ is the END copula, noted $C^{END}$. The dependence parameters $(\theta_2, \dots, \theta_d)$ for the END copula $C^{END}$ are given by
	\begin{equation}\label{eq:end-param}
		\theta_k = {}_2F_1\left(-\left\lfloor \frac{d + 1}{2}\right\rfloor, -k, 2\left\lfloor \frac{d + 1}{2}\right\rfloor, 2\right) = \frac{(1 + (-1)^k)}{2}\frac{\Gamma(k+1)\Gamma\left(\frac{1}{2} -\left\lfloor  \frac{d + 1}{2}\right\rfloor\right)}{2^k \Gamma\left(\frac{k}{2}+1\right) \Gamma\left(\frac{k+1}{2} - \left\lfloor \frac{d + 1}{2}\right\rfloor\right)}.
	\end{equation}
\end{theorem}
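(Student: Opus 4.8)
The plan is to compute the dependence parameters $\theta_k$ for the END copula directly from the stochastic representation \eqref{eq:theta_from_I}, using the explicit pmf \eqref{eq:end-fgm} of $(I_1^-, \dots, I_d^-)$. Writing $r = \lfloor (d-1)/2 \rfloor$ so that $r + 1 = \lceil (d+1)/2 \rceil = \lfloor (d+1)/2 \rfloor$ when $d$ is odd and $r = d/2 - 1$, $r + 1 = d/2 + 1$ when $d$ is even (one must be slightly careful: in fact $(I_1^-, \dots, I_d^-)$ as stated has support on $i_\bullet \in \{r, r+1\}$ with $r \le d/2 \le r+1$, and for $d$ even the natural choice giving a genuine two-point distribution is $r = d/2 - 1$, though the midpoint mass $r = d/2$ is degenerate). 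First I would use the $\zeta_k$ representation: by exchangeability, $\zeta_k = E(I_1 \cdots I_k) = \Pr(I_1^- = 1, \dots, I_k^- = 1)$, and from \eqref{eq:end-fgm} this equals $(r + 1 - d/2)\binom{d-k}{r-k}\binom{d}{r}^{-1} + (d/2 - r)\binom{d-k}{r+1-k}\binom{d}{r+1}^{-1}$, using the standard identity that the probability that a given set of $k$ coordinates are all $1$, given exactly $m$ ones placed uniformly, is $\binom{d-k}{m-k}/\binom{d}{m}$. Then Corollary \ref{cor:madsen}, equation \eqref{eq:exch-binome}, gives $\theta_k = \sum_{l=0}^k \binom{k}{l}\zeta_l (-2)^l$ as a finite sum.

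The main computation is then to evaluate this double sum in closed form and recognize it as a Gauss hypergeometric value ${}_2F_1(-\lfloor (d+1)/2 \rfloor, -k, 2\lfloor (d+1)/2 \rfloor, 2)$. I would substitute the binomial expression for $\zeta_l$, interchange the order of summation, and for each of the two terms use the Chu--Vandermonde / hypergeometric summation $\sum_l \binom{k}{l}\binom{d-l}{m-l}(-2)^l$; writing $\binom{d-l}{m-l} = \binom{d-l}{d-m}$ and expressing the sum as a terminating ${}_2F_1$ evaluated at argument $2$ should give the result after simplification with the Gamma-function duplication formula. The second (closed-form) equality in \eqref{eq:end-param} — the ratio of Gamma functions with the factor $(1 + (-1)^k)/2$ — should follow by applying a known transformation/evaluation of ${}_2F_1(\cdot, \cdot, \cdot, 2)$ for the terminating case, or equivalently by directly checking that $\theta_k = 0$ for $k$ odd (which should drop out of a symmetry in the pmf of $N_d^-$ around $d/2$, since a symmetric mixing structure forces odd moments of $\Lambda - 1/2$ to vanish — see the Remark after \eqref{eq:exch-esp-theta}) and matching the even-$k$ terms.

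An alternative, possibly cleaner route avoids hypergeometric bookkeeping: since the pmf of $N_d^-$ in \eqref{eq:extr-rays-pj} is exactly the extreme point with $j_1 = r$, $j_2 = r+1$ (the pair closest to $d/2$), and since the convex-hull / extreme-point machinery of Section \ref{ss:rays} maps extreme points of $\mathcal{N}_d$ to extreme points of $\mathcal{T}_d$, I could instead compute $\theta_k$ for a generic two-point extreme point with masses at $j_1$ and $j_2$ via \eqref{eq:exch-binome} and then specialize to $(j_1, j_2) = (r, r+1)$. This produces the same finite sum but makes the structure transparent and lets me cross-check against the bold rows in Table \ref{tab:extreme10} (rows 22 and 27).

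The hard part will be the closed-form evaluation: recognizing the terminating sum $\sum_{l=0}^k \binom{k}{l}\zeta_l(-2)^l$, with $\zeta_l$ a two-term binomial-ratio expression, as precisely the stated ${}_2F_1$ at argument $2$, and then simplifying that ${}_2F_1$ into the Gamma-ratio on the right of \eqref{eq:end-param}. The value $z = 2$ is outside the usual unit disk, so I expect to rely on the fact that the series terminates (both upper parameters are negative integers), combine the two contributions, and invoke the Legendre duplication formula $\Gamma(k+1) = 2^k \pi^{-1/2}\Gamma(k/2 + 1/2)\Gamma(k/2 + 1)$ together with reflection-type identities to collapse everything; verifying that odd $k$ gives $0$ is the cleanest sanity check and essentially forces the factor $(1 + (-1)^k)/2$.
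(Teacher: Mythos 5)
Your proposal is correct in substance but enters the computation from a different door than the paper. The paper's proof first reduces $\theta_k$ to $E\{(-1)^{I_1^-+\dots+I_k^-}\}$ (Lemma \ref{lemma:eq-thetas}), observes that the partial sum $N_{k\mid d}^-$ of the first $k$ coordinates is hypergeometric (an urn with $d/2$ ones and $d/2$ zeroes for $d$ even, and a symmetric half-half mixture of two hypergeometrics for $d$ odd), and then reads off the answer from the known factorial moment generating function $E\{(1+t)^X\}={}_2F_1(-a,-n;-a-b;-t)$ at $t=-2$. You instead compute $\zeta_l=\Pr(I_1^-=1,\dots,I_l^-=1)$ explicitly from \eqref{eq:end-fgm} and feed it into the binomial transform \eqref{eq:exch-binome}. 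These are two faces of the same identity: since $\zeta_l=E\{\binom{N^-_{k\mid d}}{l}\}/\binom{k}{l}\cdot(\text{appropriate normalization})$, your sum $\sum_{l}\binom{k}{l}\zeta_l(-2)^l$ is exactly the factorial mgf at $t=-2$ expanded in factorial moments, and indeed $\binom{d-l}{m-l}/\binom{d}{m}=m^{\underline{l}}/d^{\underline{l}}$ turns your sum directly into the terminating ${}_2F_1(-k,-m;-d;2)$. The paper's route buys you the hypergeometric identification for free and isolates the odd-$d$ case cleanly via the mixture of two symmetric urns; your route avoids Lemma \ref{lemma:eq-thetas} entirely and stays within the $\zeta$-machinery already set up in Section \ref{sec:construction}. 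Both ultimately rest on the same cited evaluation ${}_2F_1(-n,b;2b;2)$, which neither you nor the paper proves in place, so the level of rigor is comparable.

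Two small corrections. First, your justification for $\theta_k=0$ at odd $k$ appeals to the remark following \eqref{eq:exch-esp-theta} about symmetric mixing distributions, but that remark concerns de Finetti mixtures, and $(I_1^-,\dots,I_d^-)$ is \emph{not} representable as such a mixture (it is negatively dependent, while mixtures of conditionally i.i.d.\ Bernoullis are positively dependent). The correct reason is the distributional symmetry $N_{k\mid d}^-\overset{d}{=}k-N_{k\mid d}^-$ (equivalently, $(1-I_1^-,\dots,1-I_d^-)\overset{d}{=}(I_1^-,\dots,I_d^-)$), which gives $E\{(-1)^{N}\}=(-1)^kE\{(-1)^{N}\}$ and hence the factor $(1+(-1)^k)/2$; this is exactly how the paper handles the odd-$d$ case. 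Second, your bookkeeping for even $d$ is slightly off: with $r\le d/2\le r+1$ the pmf \eqref{eq:end-fgm} degenerates to the point mass at $i_\bullet=d/2$ whichever integer $r$ you pick (one of the two weights vanishes), and $r=d/2-1$ gives $r+1=d/2$, not $d/2+1$. Neither slip affects the validity of your main computation.
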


\begin{corollary}
	Alternate representations for $k$-dependence parameters for $d$ odd are 
	$$\theta_k = \begin{cases}
		-\frac{1}{d}, & k = 2, d \geq 3\\
		\frac{3}{(d-2)d}, & k = 4, d \geq 7\\
		-\frac{15}{(d-4)(d-2)d}, & k = 6, d \geq 11 
	\end{cases}, \qquad \theta_k = \prod_{l = 1}^{k/2} \frac{1 - 2l}{d-2l+2}, d \geq 2k - 1.$$
	while for $d$ even, the $k$-dependence parameters are 
	$$\theta_k = \begin{cases}
	-\frac{1}{d-1}, & k = 2, d \geq 4\\
	\frac{3}{(d-3)(d-1)}, & k = 4, d \geq 8\\
	-\frac{15}{(d-5)(d-3)(d-1)}, & k = 6, d \geq 12
	\end{cases}, \qquad \theta_k = \prod_{l = 1}^{k/2} \frac{1 - 2l}{d-2l+1}, d \geq 2k.$$
\end{corollary}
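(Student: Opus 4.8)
The plan is to start from the closed form for the END dependence parameters already obtained in Theorem~\ref{thm:lower-bound-i} and reduce it, by elementary manipulations of Gamma functions, to a telescoping product. Since the prefactor $(1+(-1)^k)/2$ in \eqref{eq:end-param} vanishes for odd $k$, it suffices to treat $k=2t$. Write $m=\lfloor (d+1)/2\rfloor$, so that $m=(d+1)/2$ when $d$ is odd and $m=d/2$ when $d$ is even. Substituting $k=2t$ into \eqref{eq:end-param} and using $\Gamma(k+1)=(2t)!$, $\Gamma(k/2+1)=t!$ and $\tfrac{k+1}{2}-m=t+\tfrac12-m$ rewrites the parameter as
\[
\theta_{2t}=\frac{(2t)!}{2^{2t}\,t!}\cdot\frac{\Gamma\!\left(\tfrac12-m\right)}{\Gamma\!\left(t+\tfrac12-m\right)}.
\]

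Next I would evaluate the two factors separately. The double-factorial identity $(2t)!=2^{t}\,t!\,\prod_{l=1}^{t}(2l-1)$ gives $\tfrac{(2t)!}{2^{2t}\,t!}=2^{-t}\prod_{l=1}^{t}(2l-1)$. For the ratio of Gamma values, iterating the recurrence $\Gamma(z+1)=z\Gamma(z)$ a total of $t$ times yields $\Gamma(t+\tfrac12-m)=\Gamma(\tfrac12-m)\prod_{l=1}^{t}\bigl(l-\tfrac12-m\bigr)$, and since $\tfrac12-m$ is a negative half-integer no pole is met; hence $\tfrac{\Gamma(\frac12-m)}{\Gamma(t+\frac12-m)}=\prod_{l=1}^{t}\tfrac{2}{\,2l-1-2m\,}$, the only care needed being to track the signs of the linear factors. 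Multiplying the two pieces, the powers of $2$ cancel and
\[
\theta_{2t}=\prod_{l=1}^{t}\frac{2l-1}{2l-1-2m}=\prod_{l=1}^{t}\frac{1-2l}{2m+1-2l}.
\]

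To finish, I would substitute the two values of $m$. For $d$ odd, $m=(d+1)/2$ gives $2m+1-2l=d-2l+2$, so $\theta_k=\prod_{l=1}^{k/2}\tfrac{1-2l}{d-2l+2}$; for $d$ even, $m=d/2$ gives $2m+1-2l=d-2l+1$, so $\theta_k=\prod_{l=1}^{k/2}\tfrac{1-2l}{d-2l+1}$, which are exactly the two displayed product formulas. The identity is valid for all $d\ge k$ (so that $t\le\lfloor d/2\rfloor\le m$ and every factor $2m+1-2l$ with $1\le l\le t$ is a positive integer), which in particular covers the ranges quoted in the statement. The explicit rational expressions for $k\in\{2,4,6\}$ then follow by expanding these products over $l\in\{1\}$, $\{1,2\}$ and $\{1,2,3\}$ and collecting the sign $\prod_{l=1}^{k/2}(1-2l)=(-1)^{k/2}(k-1)!!$, which equals $-1,3,-15$ for $k=2,4,6$.

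I do not expect a genuine obstacle: the argument is a finite, essentially bookkeeping computation, and the only delicate points are keeping the signs straight when pulling the linear factors $l-\tfrac12-m$ out of $\Gamma(t+\tfrac12-m)$, and cleanly separating the two parity cases through $m=\lfloor(d+1)/2\rfloor$. As an alternative route that avoids Gamma functions altogether, one could prove the product formula by induction on $t$: from \eqref{eq:end-param} one checks directly that $\theta_{2(t+1)}/\theta_{2t}=\tfrac{2t+1}{2t+1-2m}$, which matches the ratio of the two consecutive products, with base case $\theta_2=-1/(2m-1)$ read off immediately from \eqref{eq:end-param}.
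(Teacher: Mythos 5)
Your derivation is correct and is exactly the computation the paper leaves implicit: the corollary is stated without proof as a direct simplification of the Gamma-function form of $\theta_k$ in \eqref{eq:end-param}, and your reduction via $\Gamma(2t+1)/(2^{2t}\,\Gamma(t+1))=2^{-t}\prod_{l=1}^{t}(2l-1)$ and the telescoped ratio $\Gamma(\tfrac12-m)/\Gamma(t+\tfrac12-m)$ yields precisely $\prod_{l=1}^{k/2}(1-2l)/(2m+1-2l)$ with $m=\lfloor(d+1)/2\rfloor$, matching both parity cases and the tabulated values. Your observation that the identity in fact holds for all $d\ge k$ (the quoted ranges $d\ge 2k-1$ and $d\ge 2k$ being conservative) is consistent with Table \ref{tab:end-parameters}, so there is nothing to correct.
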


\begin{remark}
	The random vector $(I_1^-, \dots, I_d^-)$ in Theorem \ref{thm:lower-bound-i} corresponds to a dependence structure called complete mixability, see \cite{puccetti2015extremal} for details. 
\end{remark}
\begin{remark}
	Table \ref{tab:end-parameters} presents the values of $\theta_2, \dots, \theta_d$ of the END copula, $C^{END}$, for $d \in \{2, \dots, 12\}$. Although the parameters follow some pattern, it isn't obvious from looking at the values that this corresponds to the dependence parameters inducing the most negative dependence within FGM copulas. We offer a few observations on the patterns exhibited by the parameters of the END copula. First, \eqref{eq:end-param} gives the same values for consecutive values of $d \in \{3, 5, 7, \dots\}$ and $(d + 1) \in \{4, 6, 8, \dots\}$. Then, we always have $\theta_k = 0$ for $k$ odd. One also notices alternate signs for $k$ even, that is, $\theta_k$ is negative for $k/2 \in \{1, 3, 5, \dots\}$ and positive for $k/2 \in \{2, 4, 6, \dots\}$. Since $\theta_0 = 1$ and $\theta_1 = 0$ for every FGM copula, one remarks that the magnitude of the END dependence parameters $\theta_k$ is symmetric, decreasing for $k < d/2$ and increasing again for $k > d/2$. Also, $\theta_k \to 0$ as $d \to \infty$ for $k \neq d$, and the $k$-dependence parameters depend on $d$. 
\end{remark}

\begin{table}[ht]
	\centering
	\begin{tabular}{rrrrrrrrrrrr}
		$d$	& $\theta_2$ & $\theta_3$ & $\theta_4$ & $\theta_5$ & $\theta_6$ & $\theta_7$ & $\theta_8$ & $\theta_9$ & $\theta_{10}$ & $\theta_{11}$ & $\theta_{12}$ \\\hline 
		2 &         -1 &            &            &            &            &            &            &            &               &               &               \\
		3 &       -1/3 &          0 &            &            &            &            &            &            &               &               &               \\
		4 &       -1/3 &          0 &          1 &            &            &            &            &            &               &               &               \\
		5 &       -1/5 &          0 &        1/5 &          0 &            &            &            &            &               &               &               \\
		6 &       -1/5 &          0 &        1/5 &          0 &         -1 &            &            &            &               &               &               \\
		7 &       -1/7 &          0 &       3/35 &          0 &       -1/7 &          0 &            &            &               &               &               \\
		8 &       -1/7 &          0 &       3/35 &          0 &       -1/7 &          0 &          1 &            &               &               &               \\
		9 &       -1/9 &          0 &       1/21 &          0 &      -1/21 &          0 &        1/9 &          0 &               &               &               \\
		10 &       -1/9 &          0 &       1/21 &          0 &      -1/21 &          0 &        1/9 &          0 &           -1 &               &               \\
		11 &      -1/11 &          0 &       1/33 &          0 &     -5/231 &          0 &       1/33 &          0 &         -1/11 &             0 &               \\
		12 &      -1/11 &          0 &       1/33 &          0 &     -5/231 &          0 &       1/33 &          0 &         -1/11 &             0 &             1 \\
	\end{tabular}
	\caption{Extreme negative dependence copula parameters.}\label{tab:end-parameters}
\end{table}

\begin{remark}
	The bounds in \eqref{eq:supermodular-u} are valid for all FGM copulas, not only exchangeable ones. For all supermodular functions $\phi$ and random vectors $(U_1, \dots, U_d)$ with cdf $F_{U_1, \dots, U_d} = C$, a FGM copula as in \eqref{fgmco1205}, we have
	$$E\left\{\phi (U_1^{END}, \dots, U_d^{END})\right\} \leq E\left\{ \phi (U_1, \dots, U_d)\right\}\leq E\left\{ \phi (U_1^{EDP}, \dots, U_d^{EDP})\right\}.$$
\end{remark}
\begin{remark}
	The term $(1 + (-1)^k)/2$ in \eqref{eq:end-param} implies that $\theta_k = 0$ for $k \in \{3, 5, 7, \dots\}$, which is also the case for the EPD copula. As noted in \cite{blier-wong2022stochasticb}, the dependence parameters for odd indices do not contribute to the overall strength of dependence.
\end{remark}

\section{Sampling and estimation}\label{sec:sample-estimation}

\subsection{Sampling}

In \cite{blier-wong2022stochasticb}, an efficient stochastic sampling method is proposed based on the stochastic representation of FGM copulas. In Algorithm \ref{algo:sampling}, we leverage the representation based on the class $\mathcal{N}_d$ from Subsection \ref{ss:construction-n} to efficiently sample observations from eFGM copulas. Note that when the pmf of $N_d$ is an extreme point of $\mathcal{N}_d$, sampling is faster since the vector of probabilities $(p_0, \dots, p_d)$ will have at most two non-zero values. Also, for subfamilies of eFGM copulas based on finite mixtures as in \eqref{eq:bernoulli-mixture}, one may sample $\tilde{N}_d$ from line 1 of Algorithm \ref{algo:sampling} by first sampling $\tilde{\Lambda}$, then sampling $\tilde{N}_d$ from a binomial distribution with $d$ trials and success probability $\tilde{\Lambda}$. 


%

\begin{algorithm}
	\KwIn{Vector of probabilities $(p_0, \dots, p_d)$}
	\KwOut{Sample vector $(U_1, \dots, U_d)$}
	\nl Sample $\Tilde{N_d}$ from the vector of probabilities $(p_0, \dots, p_d)$\;
	\nl Sample $(\Tilde{I}_1, \dots, \Tilde{I}_d)$ with a random permutation on vector of $\Tilde{N_d}$ ones and $d - \Tilde{N_d}$ zeroes\;
	\nl \For{$j = 1, \dots, d$}{
		\nl Sample $\Tilde{V}_0, \Tilde{V}_1 \sim Unif(0, 1)$\;
		\nl Compute $U_j = \Tilde{V}_0^{1/2} \Tilde{V}_1^{\Tilde{I}_j}$\;
	}
	\nl Output $(U_1, \dots, U_d)$.
	\caption{Stochastic sampling method for eFGM copulas. } \label{algo:sampling}
\end{algorithm}

\subsection{Estimation difficulties with the FGM family of copulas}

The main difficulty in estimating the parameters of an FGM copula is that they must respect the constraints \eqref{eq:constraints-exch}. For this reason, the method of moments is unlikely to provide a set of parameters that satisfy the $2^d$ constraints. An attempt to estimate the parameters of FGM copulas is provided in \cite{ota2021effective}, by estimating the parameters one at a time and using the simplex algorithm to constrain the valid parameter set after each parameter is estimated. However, this method does not scale well to high dimensions and will provide different parameter values if the order of parameter estimation changes. In the following subsection, we provide an algorithm that guarantees that the resulting parameters satisfy \eqref{eq:constraints-exch} due to Proposition \ref{prop:convex-exch}.

\subsection{Maximum likelihood estimation}

The likelihood of a set of $m_{obs}$ observations $\left(u_{1m}, \dots, u_{dm}\right), m \in \{1, \dots, m_{obs}\}$,
for the stochastic representation of eFGM copulas is
\begin{equation}\label{eq:likelihood-1}
	L(\theta_2, \dots, \theta_d) = \prod_{m = 1}^{m_{obs}}\sum_{\{i_1, \dots, i_d\}\in \{0,1\}^d} f_{I_1, \dots, I_d}(i_1, \dots, i_d) \prod\limits_{l=1}^{d} \left[1 + (-1)^{i_l}(1 - 2u_{ml})\right].
\end{equation}
Maximizing \eqref{eq:likelihood-1} is feasible, but is computationally inconvenient since one needs to apply the system of constraints in \eqref{eq:constraints-exch}. Another approach involves using the representation in $\mathcal{N}_d$, estimating the parameters $p_k, k \in \{0, \dots, d\}$ under the constraints $\sum_{k = 0}^d p_k = 1, \sum_{k = 0}^d kp_k = d/2$ and $p_k \geq 0, k \in \{0, \dots, d\}$. This representation estimates $d-1$ parameters and admits a unique solution but we have not found an efficient algorithm to perform this optimization.

Instead, we use the construction based on Section \ref{ss:rays}, which defines eFGM copula parameters as a convex combination of parameters from extreme points of the pmfs in $\mathcal{N}_d$. The main advantage of this construction is that the likelihood is expressed as a finite mixture of $n_d$ points and we can use an Expectation-Maximisation approach to optimize the likelihood. The disadvantage of this approach is that the solution using the convex combinations of extreme points is not unique, as stated in Remark \ref{rem:non-unique}. Using the representation of $\mathcal{N}_d$ from Section \ref{ss:construction-n}, the likelihood is
\begin{equation}\label{eq:likelihood-2}
	\prod_{m = 1}^{m_{obs}}\sum_{k = 0}^{d}\pr(N_d = k)\frac{1}{\binom{d}{k}} \sum_{\substack{\{i_1, \dots, i_d\} \in \{0, 1\}^d \\ i_\bullet = k}}\prod\limits_{l=1}^{d} \left[1 + (-1)^{i_l}(1 - 2u_{ml})\right].
\end{equation}
Replacing \eqref{eq:decomp-nd} into \eqref{eq:likelihood-2}, the likelihood becomes
\begin{equation}\label{eq:likelihood-3}
	\prod_{m = 1}^{m_{obs}}\sum_{k = 0}^{d}\sum_{j = 1}^{n_d} \lambda_j \pr\left(N_{jd} = k\right)\frac{1}{\binom{d}{k}} \sum_{\substack{\{i_1, \dots, i_d\} \in \{0, 1\}^d \\ i_\bullet = k}}\prod\limits_{l=1}^{d} \left[1 + (-1)^{i_l}(1 - 2u_{ml})\right].
\end{equation}
Rearranging \eqref{eq:likelihood-3} yields
\begin{equation}
	\prod_{m = 1}^{m_{obs}}\sum_{j = 1}^{n_d} \lambda_j \sum_{k = 0}^{d} \pr\left(N_{jd} = k\right)\frac{1}{\binom{d}{k}} \sum_{\substack{\{i_1, \dots, i_d\} \in \{0, 1\}^d \\ i_\bullet = k}}\prod\limits_{l=1}^{d} \left[1 + (-1)^{i_l}(1 - 2u_{ml})\right] = \prod_{m = 1}^{m_{obs}}\sum_{j = 1}^{n_d} \lambda_j \xi_{mj},
\end{equation}
where
\begin{equation}\label{eq:xi_mj}
	\xi_{mj} = \sum_{k = 0}^{d} \pr\left(N_{jd} = k\right)\frac{1}{\binom{d}{k}} \sum_{\substack{\{i_1, \dots, i_d\} \in \{0, 1\}^d \\ i_\bullet = k}}\prod\limits_{l=1}^{d} \left[1 + (-1)^{i_l}(1 - 2u_{ml})\right],
\end{equation}
and does not depend on the parameters $\lambda_j, j \in \{1, \dots, n_d\}$, so can be computed once at the beginning of the optimization procedure. Using Lagrange multipliers to impose constraints on the parameters $\lambda_{j}, j = \{1, \dots, n_d\}$, the log-likelihood to maximize is
\begin{equation}\label{eq:obj-function}
	\mathcal{J}(\lambda_1, \dots, \lambda_{n_d}, \mu) = \sum_{m = 1}^{m_{obs}}\ln \left(\sum_{j = 1}^{n_d} \lambda_j \xi_{mj}\right) + \mu \left(\sum_{j = 1}^{n_d}\lambda_j - 1\right).
\end{equation}
We find the Lagrange multiplier $\mu = -m_{obs}$ and 
\begin{equation}
	\sum_{j = 1}^{m_{obs}} \frac{\xi_{jt}}{\sum_{l = 1}^{n_d}\hat{\lambda}_j \xi_{jl}} = m_{obs} \Longrightarrow \hat{\lambda}_{t} = \frac{\sum_{j = 1}^{m_{obs}}\frac{\hat{\lambda}_t \xi_{jt}}{\sum_{l = 1}^{n_d} \hat{\lambda}_l \xi_{jl}}}{m_{obs}}, \quad t \in \{0, \dots, n_d\}.
\end{equation}
We propose the following iterative algorithm to estimate the weights $\hat{\lambda}_1, \dots, \hat{\lambda}_{n_d}$.
\begin{algorithm}
	\KwIn{Number of simulations $n$, pmf $f_{\boldsymbol{I}}$}
	\KwOut{Set of simulations}
	\nl Initialize $\lambda_t^{(0)} = 1/n_d$ for $t = 1, \dots, n_d$\;
	\nl Set $l = 0$\;
	\nl \Repeat{Convergence of \eqref{eq:likelihood-3}}{
	\nl \For{$t = 0, \dots, n_d$}{
		\nl Set $\displaystyle \lambda_t^{(l+1)} = \frac{\sum_{j = 1}^{N} \frac{\lambda_t^{(l)}\xi_{jt}}{\sum_{k = 0}^{d} \lambda_t^{(l)} \xi_{jk}}}{N}$\;
		\nl Set $l = l + 1$
	}}
	\nl Output most recent weights $\lambda_t^{(l)}, t = 1, \dots, n_d$.
	\caption{MLE estimation as a combination of extreme points} \label{algo:em-estimation}
\end{algorithm}

\subsection{Simulation study}

To illustrate the estimation procedure, we perform a simulation study and attempt to estimate the corresponding eFGM parameters. We use Algorithm \ref{algo:sampling} to sample observations and Algorithm \ref{algo:em-estimation} to estimate the parameters $\lambda_j, j \in \{1, \dots, n_d\}$. However, we compare the resulting values of $\theta_k, k \in \{2, \dots, d\}$ since the representation based on convex combinations of extreme points is not unique, see Remark \ref{rem:non-unique}.

In this study, we consider estimation based on known uniform margins; in cases with unknown margins, one should compute pseudo-observations based on the ranks of the empirical distribution function (using the semiparametric method of \cite{genest1993statistical} or information from margins of \cite{joe1996estimation}). We consider dimension $d = 10$ and sample multivariate observations $(u_{1m}, \dots, u_{10m})$ for $m \in \{1, \dots, 10~000\}$. We then estimate the parameters $\theta_k, k \in \{2, \dots, d\}$. To the best of our knowledge, this is the first study estimating the parameters of FGM copulas for 10 dimensions, since both the stochastic representation and the assumption of exchangeability simplifies the parameter space.

We repeat the simulation and estimation 100 times with the same set of randomly generated parameters (but satisfying \eqref{eq:constraints-exch}) and present the results in Figure \ref{fig:boxplot-zoom}. The dashed line represents the true parameter, and the boxplot presents the range of estimates across the 100 replications. We present the main estimation diagnostic statistics in Table \ref{tab:simulation-study-stats}.
We note that the admissible range of individual parameters are $[-1, 1]$, and the height of the boxplots in Figure \ref{fig:boxplot-zoom} are much smaller than the admissible range. Even if the true value of parameters is close to zero, there is little variation in the parameter estimates. For example, the value of $\theta_2$ is $0.0667$, which induces weak dependence (Spearman's correlation coefficient between each pair of marginals, that is, $\rho_{S}(U_{j_1}, U_{j_2})$ for $1 \leq j_1 < j_2 \leq d$, is only $0.0667/3$), but the interquartile range is only 0.006, making the estimates significantly different than 0, on an empirical basis. Only the parameter $\theta_{10}$ has a real value outside of the interquartile range of estimated values. This isn't surprising: estimation of $\theta_5$ is based on $10!/5!/5! = 252$ different 5-tuples for each observation, while $\theta_{10}$ is based on a single 10-tuple. However, as discussed in \cite{blier-wong2022stochasticb}, $k$-dependence parameters for $k$ close to $d$ has less impact on the overall dependence: for the multivariate extensions of Spearman's rho presented in \cite{nelsen1996nonparametric}, the contribution of $10$-dependence parameters is $1/3^{10}$, while it is $1/3^2$ for $2$-dependence parameters. We conclude that the most important parameters are adequately estimated. 
\begin{table}[ht]
	
	\centering
	\begin{tabular}{rrrrrrrrrr}
		& $\theta_2$ & $\theta_3$ & $\theta_4$ & $\theta_5$ & $\theta_6$ & $\theta_7$ & $\theta_8$ & $\theta_9$ & $\theta_{10}$ \\
		Real parameter &     0.0667 &     0.1407 &     0.0709 &     0.0085 &     0.0442 &     0.0874 &    -0.0133 &    -0.1067 &        0.8667 \\
		1st Quadrant &     0.0640 &     0.1362 &     0.0669 &     0.0013 &     0.0274 &     0.0543 &    -0.0566 &    -0.1669 &        0.6840 \\
		Median &     0.0676 &     0.1390 &     0.0709 &     0.0089 &     0.0406 &     0.0716 &    -0.0307 &    -0.1278 &        0.7443 \\
		Mean &     0.0671 &     0.1392 &     0.0718 &     0.0091 &     0.0391 &     0.0722 &    -0.0305 &    -0.1288 &        0.7439 \\
		3rd Quadrant &     0.0699 &     0.1431 &     0.0759 &     0.0150 &     0.0509 &     0.0927 &    -0.0006 &    -0.0915 &        0.8211 \\
		Interquartile range &     0.0060 &     0.0069 &     0.0090 &     0.0137 &     0.0235 &     0.0383 &     0.0560 &     0.0755 &        0.1371 \\
		Standard deviation &     0.0052 &     0.0052 &     0.0071 &     0.0106 &     0.0174 &     0.0266 &     0.0373 &     0.0527 &        0.1031
	\end{tabular}
	\caption{Estimation statistics for the simulation study.}\label{tab:simulation-study-stats}
\end{table}

\begin{figure}[ht]
	\centering
	\includegraphics[width = 0.5\textwidth]{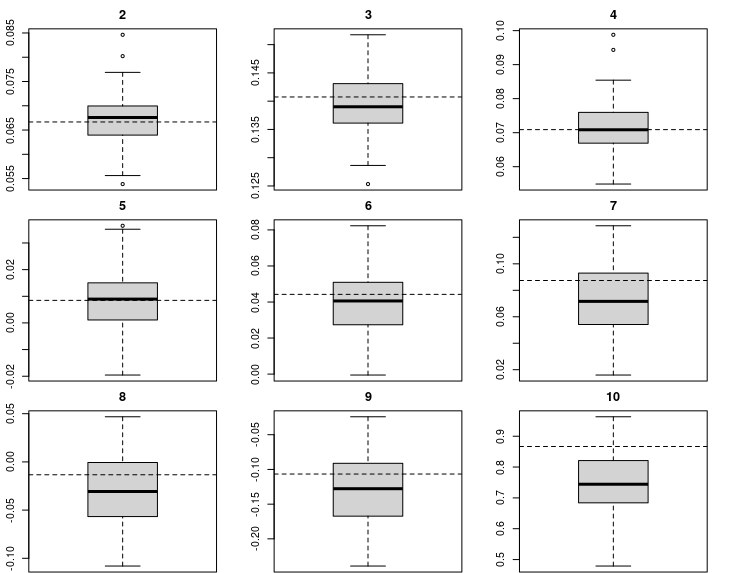}
	\caption{Boxplot of estimates for the simulation study}
	\label{fig:boxplot-zoom}
\end{figure}

\section{Acknowledgement}

This work was partially supported by the Natural Sciences and Engineering Research Council of Canada (Blier-Wong: 559169, Cossette: 04273; Marceau: 05605). The first author is also supported by grants by the Chaire en actuariat de l'Université Laval and the Quantact Actuarial and Financial Mathematics Laboratory.

\appendix

\section{Proof of parameters for exchangeable Beta}\label{app:proof-beta-exch}

We require the following Lemma, often used to prove Legendre's Duplication Formula. 
\begin{lemma}\label{lemma:beta-function}
	The following integral representation of the Beta function holds: 
	$$B(a, b) = 2\int_{0}^{1} x^{2a - 1}(1-x^2)^{b-1} \diff x.$$
\end{lemma}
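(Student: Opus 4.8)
The goal is to establish the integral representation
$B(a,b) = 2\int_0^1 x^{2a-1}(1-x^2)^{b-1}\diff x$.
The natural approach is to start from the classical Euler integral for the Beta function,
$B(a,b) = \int_0^1 t^{a-1}(1-t)^{b-1}\diff t$,
which is the standard definition, and to apply the substitution $t = x^2$. First I would record that with $t = x^2$ we have $\diff t = 2x\diff x$, and as $t$ ranges over $[0,1]$ so does $x$. Substituting gives $t^{a-1} = x^{2a-2} = x^{2(a-1)}$, $(1-t)^{b-1} = (1-x^2)^{b-1}$, and the Jacobian contributes the extra factor $2x$, so the integrand becomes $x^{2(a-1)}(1-x^2)^{b-1}\cdot 2x\diff x = 2x^{2a-1}(1-x^2)^{b-1}\diff x$.

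Combining these, $B(a,b) = \int_0^1 t^{a-1}(1-t)^{b-1}\diff t = 2\int_0^1 x^{2a-1}(1-x^2)^{b-1}\diff x$, which is exactly the claimed identity. I would also note for completeness that the manipulation is valid whenever the original Euler integral converges, i.e.\ for $\mathrm{Re}(a) > 0$ and $\mathrm{Re}(b) > 0$, which covers all cases needed in Appendix \ref{app:proof-beta-exch} where $a = \alpha + 1/2$ and $b = (k+1)/2$ with $\alpha > 0$ and $k \geq 2$.

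There is essentially no obstacle here: the only thing to be careful about is the bookkeeping of exponents (ensuring $x^{2a-2}$ times the Jacobian factor $x$ yields $x^{2a-1}$, not $x^{2a}$ or $x^{2a-3}$) and confirming that the substitution is a valid change of variables on $[0,1]$ — it is, since $x \mapsto x^2$ is a smooth increasing bijection of $[0,1]$ onto itself. Thus the lemma follows immediately from the Euler integral representation of the Beta function by the substitution $t = x^2$.
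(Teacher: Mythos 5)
Your proof is correct and follows exactly the paper's argument: both start from the Euler integral $B(a,b)=\int_0^1 t^{a-1}(1-t)^{b-1}\diff t$ and apply the substitution $t=x^2$. Your version simply spells out the exponent bookkeeping and the domain of validity, which the paper leaves implicit.
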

\begin{proof}
	Using the definition of the Beta function, $B(a, b) = \int_{0}^{1}u^{a-1}(1-u)^{b-1} \diff u$, and substituting $u = x^2$ yields the desired result.
\end{proof}

We now prove the formulas in Example \ref{exa:definetti-beta}. From \eqref{eq:exch-esp-theta}, we obtain
\begin{equation}\label{eq:theta-beta}
	\theta_{k} =(-2)^kE_{\Lambda}\left\{\left(\Lambda - \frac{1}{2}\right)^k \right\}= (-2)^k \int_0^1 \frac{\Gamma(\alpha + \alpha)}{\Gamma(\alpha)\Gamma(\alpha)}\lambda^{\alpha - 1} (1 - \lambda)^{\alpha - 1}\left(\lambda - \frac{1}{2}\right)^k d\lambda.
\end{equation}
Using the substitution $\lambda = \frac{1 + v}{2}$, it follows that 
\begin{align}
	\theta_{k} 
	&= (-1)^k \frac{\Gamma(2\alpha)}{\Gamma(\alpha)^2}4^{-\alpha}\int_{-1}^1 2\left(1 - v^2\right)^{\alpha - 1} v^k \diff v.\label{eq:theta-beta2}
\end{align}
Let us now solve the integral in \eqref{eq:theta-beta2}. One notices that $2\left(1 - v^2\right)^{\alpha - 1} v^k$ is an even function for $k \in \{2, 4, 6, \dots\}$ and an odd function for $k \in \{1, 3, 5, \dots\}$, so the integral equals 
\begin{equation}\label{eq:int-beta-exch}
	\int_{-1}^1 2\left(1 - v^2\right)^{\alpha - 1} v^k dv = \begin{cases}
		2\times \int_{0}^1 2\left(1 - v^2\right)^{\alpha - 1} v^k \diff v, & k \in \{2, 4, 6, \dots\}\\
		0, & k \in \{1, 3, 5, \dots\}
	\end{cases}.
\end{equation}
Therefore, we have $\theta_{k} = 0$ for $k = 1, 3, 5, \dots$. When $k$ is even, applying Lemma \ref{lemma:beta-function} on \eqref{eq:int-beta-exch} with $a= \frac{k + 1}{2}$ and $b = \alpha$ and simplifying, we obtain
\begin{align*}
	\theta_{k} &= 2\times 4^{-\alpha}\frac{\Gamma(2\alpha)}{\Gamma(\alpha)^2}\frac{\Gamma\left(\frac{k + 1}{2}\right)\Gamma(\alpha)}{\Gamma\left(\alpha + \frac{k + 1}{2}\right)}= 
	2\times 2^{-2\alpha}\frac{2^{2\alpha - 1} \Gamma\left(\alpha + \frac{1}{2}\right)}{\sqrt{\pi}}\frac{\Gamma\left(\frac{k + 1}{2}\right)}{\Gamma\left(\alpha + \frac{k + 1}{2}\right)},\label{eq:theta-beta3}
\end{align*}
the final equality follows by using Legendre's duplication formula (see, for example, \cite{abramowitz1964handbook}).

\section{Proof of supermodular lower bound}\label{app:lower-bound}

\subsection{A Lemma}
In this appendix, we identify the copula parameters corresponding to the lower bound for FGM copulas under the supermodular order from Theorem \ref{thm:lower-bound-i}. The following result will be useful. 
\begin{lemma}\label{lemma:eq-thetas}
	We have
	$$\theta_k = (-2)^kE\left\{\prod_{j = 1}^k \left(I_j - \frac{1}{2}\right)\right\} = E\left\{(-1)^{I_1 + \dots + I_k}\right\}.$$
\end{lemma}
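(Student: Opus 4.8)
The plan is to observe that the first equality is nothing but the defining relation \eqref{eq:theta_from_I} from Theorem \ref{thm:stochastic-representation}, so the only content is the second equality. For that, I would first record the elementary pointwise identity: since each $I_j$ takes values in $\{0,1\}$, one has $I_j - \tfrac12 \in \{-\tfrac12, \tfrac12\}$, and a direct check of the two cases gives
\begin{equation*}
	-2\left(I_j - \tfrac12\right) = 1 - 2I_j = (-1)^{I_j},
\end{equation*}
valid for every realization of $I_j$ (indeed $-2(0-\tfrac12)=1=(-1)^0$ and $-2(1-\tfrac12)=-1=(-1)^1$).

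Next I would multiply this identity over $j = 1, \dots, k$, distributing the $k$ factors of $-2$ into the product, to obtain the deterministic (pathwise) equality
\begin{equation*}
	(-2)^k \prod_{j=1}^k \left(I_j - \tfrac12\right) = \prod_{j=1}^k\left(-2\left(I_j - \tfrac12\right)\right) = \prod_{j=1}^k (-1)^{I_j} = (-1)^{I_1 + \dots + I_k}.
\end{equation*}
Taking expectations of both sides then yields $(-2)^k E\{\prod_{j=1}^k (I_j - \tfrac12)\} = E\{(-1)^{I_1 + \dots + I_k}\}$, which together with \eqref{eq:theta_from_I} gives the claimed chain of equalities.

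There is essentially no obstacle here: the argument is a pathwise algebraic manipulation on a Bernoulli vector followed by linearity of expectation, and exchangeability or symmetry of $(I_1, \dots, I_d)$ is not even needed. The only thing worth stating carefully is the two-case verification of $-2(I_j - \tfrac12) = (-1)^{I_j}$, after which everything follows by multiplying out and taking expectations.
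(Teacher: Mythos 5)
Your proof is correct and follows exactly the same route as the paper's: the paper's one-line proof ("Since $I$ takes values 0 or 1, one replaces $1 - 2I = (-1)^I$ and simplifies") is precisely the pointwise identity $-2(I_j - \tfrac12) = (-1)^{I_j}$ that you verify, followed by multiplying over $j$ and taking expectations. You have simply written out the details that the paper leaves implicit.
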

\begin{proof}
	Since $I$ takes values 0 or 1, one replaces $1 - 2I = (-1)^I$ and simplifies.
\end{proof}

\subsection{Dependence parameters for even dimensions}
Let $N_d^- = I_1^- + \dots + I_d^-$, then we have $\pr(N_d^- = d/2) = 1$. Consider the vector containing the first $k$ elements of $(I_1^-, \dots, I_d^-)$, noted $(I_{1\mid d} -, \dots, I_{k\mid d}^-)$ and the rv $N_{k\mid d}^- = I_{1\mid d}^- + \dots + I_{k\mid d}^-$. From Lemma \ref{lemma:eq-thetas}, we have
$$\theta_k = E\left\{(-1)^{I_{1\mid d}^- + \dots + I_{k\mid d}^-}\right\} = E\left\{(-1)^{N_{k\mid d}^-}\right\}.$$
One can interpret the pmf of $N_{k\mid d}^-$ as the probability of selecting without replacement $j$ ones from $k$ samples, from an urn containing $d/2$ ones and $d/2$ zeroes. Then, 
\begin{equation}\label{eq:pmf-nk}
	\pr(N_{k\mid d}^- = j) = \binom{d/2}{j}\binom{d/2}{k-j}/\binom{d}{k}, \quad j \in \{0, \dots, k\},
\end{equation}
which is the pmf of a hypergeometric distribution. From \cite[Section 6.3]{johnson2005univariate}, the (descending) factorial moment generating function for a hypergeometric distribution $X$ of $a$ successes, $b$ failures and $n$ picks is
	$E\left\{(1 + t)^X\right\} = {}_2F_1(-a, -n; -a-b; -t).$
Replacing $t = -2$ yields the desired result. The second equality in \eqref{eq:end-param} follows from the identities ${}_2F_1(a, b; c; z) = {}_2F_1(b, a; c; z)$ and
$${}_2F_1(-n, b; 2b; 2) = \frac{n!2^{-n-1}\{1 + (-1)^n\}\Gamma(b + 1/2)}{(n/2)! \Gamma((n+1)/2 + b)}, \quad n \in \mathbb{N}.$$


\subsection{Dependence parameters for odd dimensions}
For $d$ odd, we have $\pr\{N^-_d = (d-1)/2\} = \pr\{N^-_d = (d+1)/2\} = 1/2$. By symmetry of Pascal's triangle, both binomial coefficients of \eqref{eq:end-fgm} are equal, so the pmf is equal over all cases where $N_d^-$ equals $(d-1)/2$ or $(d+1)/2$. Therefore, for $d$ odd, we have
\begin{equation}
	\pr(N_{k:d}^- = j) = \frac{1}{2}\binom{\frac{d-1}{2}}{j}\binom{d - \frac{d-1}{2}}{k-j}/\binom{d}{k} + \frac{1}{2}\binom{\frac{d+1}{2}}{j}\binom{d - \frac{d+1}{2}}{k-j}/\binom{d}{k}, \quad j \in \{0, \dots, k\},
\end{equation}
which is the average of the pmf of hypergeometric distributions with $(d+1)/2$ ones, $(d-1)/2$ zeroes and $k$ picks, and $(d-1)/2$ ones, $(d+1)/2$ zeroes and $k$ picks. Both cases are symmetric (with ones and zeroes swapped), so one can define the rv $N_{k:d}^-{}'$ which follows a hypergeometric distribution with $(d+1)/2$ ones, $(d-1)/2$ zeroes and $k$ picks, then similarly to the even case, we have
$$E\left\{(-1)^{N_{k:d}^-{}}\right\} = \frac{1}{2}E\left\{(-1)^{N_{k:d}^-{}'}\right\} + \frac{1}{2}E\left\{(-1)^{k - N_{k:d}^-{}'}\right\} = \left(\frac{1}{2} + \frac{1}{2}(-1)^k\right) E\left\{(-1)^{N_{k:d}^-{}'}\right\}.$$
Applying the factorial moment generating function once again yields the desired result. 

\bibliographystyle{apalike}
\bibliography{paper-ref}

\end{document}